\documentclass[11pt]{article}

\usepackage[margin=1in]{geometry}
\usepackage[T1]{fontenc}
\usepackage{lmodern}
\usepackage[hidelinks]{hyperref}
\usepackage{tikz}
\usepackage{microtype}
\usepackage{amsmath,amssymb,amsfonts,amsthm}
\usepackage{mathtools}
\usepackage{bm}
\usepackage{graphicx}
\usepackage{booktabs}
\usepackage{enumitem}

\theoremstyle{plain}
\newtheorem{theorem}{Theorem}[section]
\newtheorem{proposition}[theorem]{Proposition}
\newtheorem{lemma}[theorem]{Lemma}
\newtheorem{corollary}[theorem]{Corollary}

\theoremstyle{definition}

\theoremstyle{remark}

\title{Higher-Order Derivatives Do Not Accelerate the\\ Computation of Fixed Points}

\author{
    Uijeong Jang\thanks{Department of Mathematics, University of California, Los Angeles ({uijeongjang@math.ucla.edu})}
    \and
    Ernest K. Ryu\thanks{Department of Mathematics, University of California, Los Angeles ({eryu@math.ucla.edu})}
}

\date{}

\begin{document}
\maketitle

\begin{abstract}
The Picard iteration converges to the unique fixed point of a $q$-contractive operator at a linear rate $q^N$, and a lower bound with an affine construction shows that no deterministic method querying only operator values can do better. But what about higher-order methods that query derivatives? A single Jacobian evaluation reveals an affine map entirely, so the affine construction says nothing about higher-order methods. In this work, we show that finite-order derivative information still does not accelerate the worst-case complexity for smooth contractive fixed-point computation. This contrasts with higher-order smooth minimization, where higher-order derivatives do improve worst-case rates for convex and non-convex minimization.

\end{abstract}

\section{Introduction}

Consider the fixed-point problem
\[
\begin{array}{ll}
{\mbox{find}} & x_\star \in \mathbb R^n \\
{\mbox{such that}} & F(x_\star)=x_\star,
\end{array}
\]
where $F\colon\mathbb R^n\to\mathbb R^n$ is a contraction mapping with contraction factor $q\in(0,1)$.
Then the classical Picard iteration
\cite{banach1922operations,picard1890memoire}
\[
    x_{t+1}=F(x_t),\qquad t=0,1,\ldots,
\]
converges linearly:
\begin{equation}\label{eq:picardupperbound}
     \|x_t-x_\star\|\le q^t\|x_0-x_\star\|.
\end{equation}
This elementary estimate is one of the most basic and foundational convergence results in numerical analysis and optimization \cite{ortega2000iterative,kelley1995iterative,bauschke2017convex}.

A natural question is whether Picard iteration can be accelerated.  For strictly contractive mappings, if a method is allowed to query only the value $F(x)$, a lower bound based on an affine construction $F(x)=Ax+b$ establishes that the Picard rate cannot be improved  \cite{nemirovsky1983problem,nesterov2018lectures,park2022exact}. However, if an oracle both returns the operator value and first derivative, then one query yields
\[
    DF(x)=A,
\]
thereby providing all the information needed to recover the fixed point at once. Consequently, the affine lower bound says nothing about the complexity of higher-order methods for fixed-point computation. It is therefore natural to ask whether access to higher-order derivative information can accelerate the computation of fixed points.

In this paper, we show that, in fact, higher-order derivatives do not make acceleration possible. Specifically, we prove that for every fixed finite order $p\in \mathbb{N}$, having access to 
\[
\{F(x),DF(x),D^2F(x),\dots , D^pF(x)\}
\]
does not improve the worst-case fixed-point error and fixed-point residual for smooth contractive fixed-point problems. This stands in contrast
to smooth minimization, where higher-order information does improve worst-case rates; we return
to this comparison in Section~\ref{ss:sketch} and Section~\ref{ss:prior-work}.
% \cite{arjevani2019oracle, carmon2020lower}.

\paragraph{Contributions.} 
We prove exact lower bounds for deterministic finite-order methods. For a fixed finite oracle order $p$, a contraction factor $0<q<1$, an iteration/query budget $N\ge 1$, a deterministic method that queries the value and derivatives of $F$ up to order $p$ with a starting point $x_0$, and $\varepsilon\in(0,1)$, there is a $q$-contractive map $F$ with a unique fixed point $x_\star$ such that the method's $N$-th iterate $x_N$ satisfies
\[
    \|x_N-x_\star\|\ge (1-\varepsilon)q^N\|x_0-x_\star\|.
\]
Therefore, Picard iteration is exactly worst-case optimal for fixed-point error over the class of $q$-contractive mappings: no deterministic
finite-order method can improve even the leading constant in the rate
$q^N$ (Theorem~\ref{thm:error} and Corollary~\ref{cor:minimaxerror}). Also, when performance is measured by the fixed-point residual $\|x_N-F(x_N)\|$, similar proof technique also yields a matching optimality result:
\[
    \|x_N-F(x_N)\|\ge (1-\varepsilon)(1+q)\left(\sum_{j=0}^{N}q^{-j}\right)^{-1}\|x_0-x_\star\|.
\]
This matches the exact value-oracle residual constant attained by OC-Halpern~\cite{park2022exact} (Theorem~\ref{thm:residual} and Corollary~\ref{cor:minimaxerrorresidual}). 
Also, by taking $q\to 1$, we recover the matching $\Theta(1/N)$ bound for nonexpansive operators~\cite{lieder2021convergence,contreras2023optimal,park2022exact} (Corollary~\ref{cor:minimaxerrorresidual}).

\subsection{Proof sketch}\label{ss:sketch}
We briefly outline the core proof technique. We first construct a one-dimensional auxiliary function $\varphi\colon\mathbb{R}\rightarrow\mathbb{R}$. It is $p$-times continuously differentiable and satisfies 
\[ 
    0 \le \varphi' \le q,  \qquad  \mathrm{Lip}(\varphi^{(p)}) \le L, \qquad \varphi^{(k)}(0)=0 \textup{ for } 0\le k\le p,
\]
and is affine on the tail. Figure~\ref{fig:phi} illustrates the shape of this function.  

\begin{figure}[h]
\centering
\begin{tikzpicture}[
    x=1.0cm,y=1.0cm,
    >=stealth,
    line cap=round,
    line join=round
]

    \def\B{0.72}

    \draw[->] (-3.45,0) -- (3.55,0) node[right] {$s$};
    \draw[->] (0,-2.65) -- (0,2.75) node[above] {$\varphi(s)$};

    \draw[very thick, blue!70!black]
        (-3.25,-2.53) -- (-1,-0.28)
        .. controls (-0.88,-0.16) and (-0.72,0) .. (-0.5,0)
        -- (0.5,0)
        .. controls (0.72,0) and (0.88,0.16) .. (1,0.28)
        -- (3.25,2.53);

    \node[blue!70!black, above] at (0,0.08) {$\varphi(s)=0$};
    \node[blue!70!black, above left] at (-2.15,-1.43) {$\varphi(s)=qs+b$};
    \node[blue!70!black, above left] at (2.75,2.03) {$\varphi(s)=qs-b$};
\end{tikzpicture}
\caption{Plot of the auxiliary scalar function $\varphi$, which is flat near the origin and affine on the tail.}
\label{fig:phi}
\end{figure}

Then we consider the following operator, where $v_1,\dots,v_m\in\mathbb{R}^n$ are orthonormal:
\begin{equation}\label{eq:map}
    F(x)=c v_1+\sum_{i=1}^{m-1}\varphi(\langle v_i,x\rangle)v_{i+1}.
\end{equation}
Assume, without loss of generality, the method starts from the origin, i.e., $x_0=0$. Then, the oracle information up to the $p$-th derivative reveals only the direction $v_1$ at the first iteration. At iteration $1<t\le N$, the oracle can reveal directions in $\mathrm{span}\{v_1,\dots,v_{t+1}\}$. This chain argument can be made more formal with the resisting-oracle technique \cite{nemirovsky1983problem}.

The fixed point is placed on the positive affine branch of $\varphi$ by choosing 
\[ 
s_1=c,\qquad qs_i-b=s_{i+1}, \qquad x_\star=\sum_{i=1}^{m}s_i v_i.
\] 
Then $\varphi(s_i)=s_{i+1}$ and hence $F(x_\star)=x_\star$. After $N$ queries, the final output $x_N$ is orthogonal to the hidden tail $v_{N+1},\dots,v_m$. The distance from $x_N$ to the hidden tail of $x_\star$ then gives the non-trivial lower bound
\[
    \|x_N-x_\star\| \ge (1-\varepsilon)q^N\|x_0-x_\star\|
\]
for any $\varepsilon\in(0,1)$.

When the performance is measured by the fixed-point residual, we modify the operator $F$:
\[
F(x)=c v_1-\varphi(\langle v_{N+1},x\rangle)v_1 +\sum_{i=1}^{N}\varphi(\langle v_i,x\rangle)v_{i+1}.
\]
The fixed point is again placed on the positive affine branch of $\varphi$ and the direction $v_{N+1}$ remains hidden. This yields
\[
\|x_N-F(x_N)\| \ge(1-\varepsilon)(1+q)
\left(\sum_{j=0}^{N}q^{-j}\right)^{-1} \|x_0-x_\star\|
\]
for every $\varepsilon\in (0,1)$. 

This no-acceleration lower bound contrasts with smooth minimization, where
higher-order derivative information does improve the optimal rate. Indeed, a
convex minimization problem can be equivalently written as
\[
\begin{array}{ll}
{\mbox{find}} & x_\star \in \mathbb R^n \\
{\mbox{such that}} & \nabla f(x_\star)=0,
\end{array}
\]
but the corresponding vector field $F\colon=\nabla f$ has more constraints. Namely, $F=\nabla f$ it is the
gradient of a scalar potential, so its Jacobian $DF(x)=\nabla^2 f(x)$ must be
symmetric at every $x\in\mathbb{R}^n$. Contractive operators, in contrast, are
subject to no such symmetry constraint, and this  extra freedom enables the
adversarial chain \eqref{eq:map}, whose Jacobian is strictly lower triangular
in the basis $v_1,\ldots,v_m$ and hence far from symmetric, to fully hide the
structural information contained in future coordinates from any finite-order
oracle.

\subsection{Prior work}
\label{ss:prior-work}

\paragraph{Oracle complexity with higher-order derivatives.}
Oracle complexity studies how many oracle queries are needed to reach a target accuracy for a given problem, and has been a standard framework for understanding the limitations of optimization algorithms. In convex optimization, the standard accuracy criterion is the function-value gap $f(x_N)-f_\star\le \varepsilon$. For first-order methods over $L$-smooth convex functions, Nesterov acceleration reaches this accuracy in $O(\varepsilon^{-1/2})$ gradient queries, matching the classical lower bounds of Nemirovsky and Yudin~\cite{nemirovsky1983problem,nesterov2018lectures}. In the $L$-smooth, $\mu$-strongly convex case, the corresponding optimal complexity is $O(\sqrt{\kappa}\log (1/\varepsilon))$, with condition number $\kappa=L/\mu$. Exact-information-complexity results sharpen these statements to exact constants for several smooth convex classes \cite{drori2017exact,drori2022oracle,jang2025computer,chari2026optimal}. For higher-order methods with $p\ge 2$, accelerated $p$-th-order tensor methods reach $\varepsilon$-accuracy in $O(\varepsilon^{-2/(3p+1)})$ $p$-th-order oracle queries \cite{kovalev2022first}. This complexity matches known lower bounds for higher-order smooth convex minimization \cite{arjevani2019oracle}. These function-value lower bounds are often built from affine or quadratic chain constructions: the oracle reveals one new direction at a time, while the unobserved tail carries the remaining objective gap.

\paragraph{Oracle complexity for nonconvex stationarity.}
A parallel line of work studies lower bounds for finding approximate $\varepsilon$-stationary points, defined as points satisfying $\|\nabla f(x)\|\le \varepsilon$, in smooth nonconvex optimization. With only Lipschitz
gradients, gradient descent achieves
$O(\varepsilon^{-2})$ first-order oracle complexity, and this rate is worst-case optimal \cite{carmon2020lower}. Higher-order smoothness plays two distinct roles in smooth nonconvex optimization. First, one may assume higher-order smoothness of the objective while still giving the method only first-order information. In this setting, first-order methods can improve over gradient descent. For example, accelerated first-order methods achieve rates of order $\varepsilon^{-7/4}$ under Lipschitz Hessians and
$\varepsilon^{-5/3}$ under Lipschitz third derivatives, up to logarithmic
factors in some variants \cite{carmon2017convex,jin2018accelerated,carmon2018accelerated, li2023restarted}. Zero-chain lower bounds showed that such improvements are nevertheless limited: deterministic first-order methods require $\Omega(\varepsilon^{-12/7})$ queries in the Hessian-Lipschitz setting and $\Omega(\varepsilon^{-8/5})$ queries for arbitrarily smooth objectives \cite{carmon2021lower}. A very recent work closes these exponent gaps, proving matching deterministic first-order lower bounds of $\Omega(\varepsilon^{-7/4})$ in the Hessian-Lipschitz case and $\Omega(\varepsilon^{-5/3})$ under Lipschitz third derivatives~\cite{zhou2026sharp}.

Second, one may strengthen the oracle itself by allowing higher-order derivative queries. Even then, for functions with Lipschitz $p$-th derivatives, any randomized algorithm requires $\Omega(\varepsilon^{-(p+1)/p})$ queries to find an $\varepsilon$-stationary point~\cite{carmon2020lower}. This matches the known upper bounds for gradient descent, cubic-regularized Newton methods, and more general $p$-th order regularization methods in their corresponding smoothness classes~\cite{nesterov2006cubic,cartis2011adaptive,nesterov2018lectures}.

\paragraph{Fixed-point oracle complexity.}
The convergence theory of contractive fixed-point iteration goes back to Banach's contraction principle and its numerical-analysis consequences \cite{banach1922operations,ortega2000iterative,kelley1995iterative}. The oracle-complexity viewpoint has also been used to show that Picard's $q^N$ rate is worst-case optimal: for value-oracle, or zeroth-order, fixed-point methods, the standard lower-bound argument shows that the rate in the Picard fixed-point error bound cannot be improved for strict contractions \cite{nemirovsky1983problem}. On the other hand, the standard performance measure in fixed-point problems is often the fixed-point residual $\|x-Tx\|$. In this setting, Halpern-type methods attain an $O(1/N)$ residual rate \cite{sabach2017first,lieder2021convergence,kim2021accelerated,diakonikolas2020halpern,contreras2023optimal}, which is known to be optimal for nonexpansive fixed-point problems \cite{park2022exact}. These results are fixed-point analogs of optimization lower bounds. However, existing fixed-point results do not address whether finite-order derivative information can improve the worst-case complexity of smooth contractive and nonexpansive fixed-point problems.

\subsection{Notation and definitions}
Throughout this paper, all spaces are finite-dimensional Euclidean spaces.  We write $\langle \cdot,\cdot\rangle$ and $\|\cdot\|$ for the Euclidean inner product and norm.  The norm of a linear map is the induced operator norm, and the norm of a multilinear map is the corresponding operator norm.  Write $\mathbb N = \{1,2,3,\ldots\}$ for the set of positive integers.

For a $p$-times continuously differentiable map, or a $C^p$ map, $F\colon\mathbb R^n\to\mathbb R^n$ where $p\in\mathbb N$, we denote by $D^kF(x)$ its $k$-th derivative at $x$, viewed for $k\ge1$ as a symmetric $k$-linear map.  We also use the convention
\[
    D^0F(x)=F(x).
\]
Thus the derivatives of $F$ up to order $p$ at $x$ are
\[
    \bigl\{D^kF(x)\bigr\}_{k=0}^p.
\]
We write $\mathrm{Lip}(D^pF)$ for the Lipschitz constant of the map $x\mapsto D^pF(x)$, i.e.,
\[
    \mathrm{Lip}(D^pF) := \sup_{x\neq y}\frac{\|D^pF(x)-D^pF(y)\|}{\|x-y\|},
\]
where the norm on $D^pF(x)-D^pF(y)$ is the operator norm on $p$-linear maps.

A map $F\colon\mathbb R^n\to\mathbb R^n$ is called a \emph{$q$-contraction} if
\[
    \|F(x)-F(y)\|\le q\|x-y\|\qquad\text{for all }x,y\in\mathbb R^n,
\]
where $q\in(0,1)$.  A map $T\colon\mathbb R^n\to\mathbb R^n$ is called \emph{nonexpansive} if
\[
    \|T(x)-T(y)\|\le \|x-y\|\qquad\text{for all }x,y\in\mathbb R^n.
\]
If $x_\star=F(x_\star)$, we call $x_\star$ a fixed point of $F$.

For $p\in\mathbb N$, $q\in(0,1)$, and $L>0$, define $\mathcal C_{p,q,L}(\mathbb R^n)$ to be the class of maps $F\colon\mathbb R^n\to\mathbb R^n$ such that
\[
    F\in C^{p},\qquad \|DF(x)\|\le q\quad\text{for all }x\in\mathbb R^n,
\]
and
\[
    \|D^pF(x)-D^pF(y)\|\le L\|x-y\|\qquad\text{for all }x,y\in\mathbb R^n.
\]
Every map in $\mathcal C_{p,q,L}(\mathbb R^n)$ with $q\in(0,1)$ is a $q$-contraction and hence has a unique fixed point. 

Finally, a \emph{deterministic $p$-th order method} is a sequence of deterministic update rules $\{A_t\}_{t\ge 1}$ which, given an initial point $x_0\in\mathbb R^n$ and oracle access to a map $F$, produces iterates $x_1,x_2,\ldots$ such that
\[
    x_t=A_{t}\left(x_0,\{D^kF(x_s)\}_{k=0}^p,\ 0\le s<t\right), \quad t\ge1.
\]
That is, each query point may depend arbitrarily on all previously observed values and derivatives
up to order $p$, but on no other information about $F$.

\paragraph{Organization.} Section~\ref{sec:chain} proves the exact fixed-point-error lower bound. Section~\ref{sec:residual} proves the exact residual lower bound for contractions and, in the limit $q \to 1$, the $\Omega(1/N)$ residual lower
bound for nonexpansive maps. Section~\ref{sec:conclusion} concludes.

\section{Fixed-point lower bound for contractive maps}\label{sec:chain}

We first state the formal exact fixed-point-error lower bound theorem.

\begin{theorem}\label{thm:error} 
Let $p,N\in\mathbb N$, $q\in(0,1)$, $L>0$, and $\varepsilon\in(0,1)$. Then there exists
\[
n_0=n_0(p,q,N,\varepsilon)\in\mathbb N
\]
with the following property: For every $n\ge n_0$, every starting point $x_0\in\mathbb R^n$, and every deterministic $p$-th order method initialized at $x_0$, there exists a map $F\in\mathcal C_{p,q,L}(\mathbb R^n)$ with a unique fixed point $x_\star$ such that the method's $N$-th iterate $x_N$ satisfies
\[
\|x_N-x_\star\|\ge (1-\varepsilon)q^N\|x_0-x_\star\|.
\] 
\end{theorem}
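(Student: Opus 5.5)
We follow the chain construction sketched in Section~\ref{ss:sketch}. First, translating the problem, we may take $x_0=0$. Indeed, replace $F$ by $F(\cdot+x_0)-x_0$ and shift the method accordingly; this preserves the class $\mathcal C_{p,q,L}$ and all distances.

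\textbf{Scalar building block.} We build $\varphi\in C^p(\mathbb R)$ with the following properties: $\varphi\equiv0$ on $[-a,a]$; $\varphi(s)=qs-b$ for $s\ge a'$ and $\varphi(s)=qs+b$ for $s\le -a'$, where $0<a<a'$ and $b>0$; $0\le\varphi'\le q$; and $\mathrm{Lip}(\varphi^{(p)})\le L$. To get it, take $\varphi'=q\,\psi$, where $\psi$ is a $C^{p}$ odd-symmetric smoothed step from $0$ to $1$ on $[a,a']$ (and mirrored). The Lipschitz constant of $\varphi^{(p)}$ scales like $q\,(a'-a)^{-p}$ times a constant depending on $p$. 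Taking $a'-a$ large therefore gives $\mathrm{Lip}(\varphi^{(p)})\le L$. This also forces $b\approx q(a+a')/2$ to be large. The key point is that $b$ and $a'$ are fixed constants, independent of the scale of the fixed point, which we will make huge.

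\textbf{The hard map.} Choose orthonormal $v_1,\dots,v_m$ adaptively via a resisting oracle and set $F(x)=cv_1+\sum_{i=1}^{m-1}\varphi(\langle v_i,x\rangle)v_{i+1}$. Then $DF(x)=\sum_i\varphi'(\langle v_i,x\rangle)v_{i+1}v_i^\top$ is a weighted shift with weights in $[0,q]$, so $\|DF\|\le q$. Likewise $D^pF(x)=\sum_i\varphi^{(p)}(\langle v_i,x\rangle)\,v_{i+1}\otimes v_i^{\otimes p}$. The differences of these terms have orthogonal output directions and orthogonal input directions, so we get $\|D^pF(x)-D^pF(y)\|\le \max_i|\varphi^{(p)}(\langle v_i,x\rangle)-\varphi^{(p)}(\langle v_i,y\rangle)|\le L\|x-y\|$. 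Here we bound the operator norm using $\sum_i |u_i|\,|w_i|^{p}\le 1$ for unit $u,w$ (each $|w_i|\le1$), giving at most $\max_i$ times $\sqrt{\sum|u_i|^2}\sqrt{\sum|w_i|^2}$. Hence $F\in\mathcal C_{p,q,L}$.

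\textbf{Zero-chain / resisting oracle.} We maintain by induction that after $t$ queries, $x_0,\dots,x_{t}$ lie in $E_t:=\mathrm{span}\{v_1,\dots,v_{t}\}$ (with $E_0=\{0\}$). Only $v_1,\dots,v_{t+1}$ need to be fixed at that time, and the remaining $v_j$ are chosen orthogonal to everything seen so far; this is why $n_0$ is taken at least about $2N+2$. Suppose $x\in E_t$. Then $\langle v_i,x\rangle=0$ for $i>t$, and $\varphi$ vanishes to all orders near $0$. So $F(x)$ and all $D^kF(x)$ involve only $v_1,\dots,v_{t+1}$, and the oracle answers are computable without the tail. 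By determinism, $x_{t+1}$ is then a function of $v_1,\dots,v_{t+1}$. We choose $v_{t+2}$ orthogonal to $x_{t+1}$ and to the earlier vectors, and relabel the spans so that the induction proceeds. The bookkeeping (allowing $x_{t+1}\notin E_{t+1}$ while keeping the answers tail-free) is the main technical care. The cleanest fix is to pick each new $v_{j}$ orthogonal to all query points so far. Then the coordinates $\langle v_j,x_s\rangle$ vanish for every hidden $j\ge t+2$ at the time of query $s$, and $x_N\perp v_{N+2},\dots,v_m$.

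\textbf{Fixed point and the estimate (main obstacle).} Set $s_1=c$ and $s_{i+1}=qs_i-b$. We choose $c$ huge so that all $s_i\ge a'$ for $i\le m$, which keeps every $s_i$ on the positive affine branch. Then $x_\star=\sum_i s_iv_i$ is the fixed point, and
\[
\|x_N-x_\star\|^2\ge\sum_{i\ge N+2}s_i^2 .
\]
With $b/c\to0$ we have $s_i\approx q^{i-1}c$. The ratio to $\|x_\star\|^2\approx c^2\sum_{i\le m}q^{2(i-1)}$ is then $\approx q^{2(N+1)}$, which loses one factor of $q$. To recover exactly $q^N$, we instead seal the first direction as well: choose $v_1$ orthogonal to $x_1$ too, i.e.\ we exploit that the query at $x_0=0$ reveals only $v_1$, and $v_2$ can be chosen after $x_1$. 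This gives hiding of $v_{N+1},\dots$ as in the sketch, so the bound becomes $\sum_{i\ge N+1}s_i^2/\sum_{i\ge1}s_i^2\to q^{2N}$ as $m\to\infty$ and $b/c\to0$. Concretely, the plan is to take $m$ large so that $1-q^{2(m-N)}$ is near $1$, and then $c$ large so that the relative perturbation from $b$ is tiny. Both choices depend only on $(q,N,\varepsilon)$, which yields $n_0$ and the factor $(1-\varepsilon)$. The delicate point I expect to be hardest is getting the indexing of the resisting oracle tight enough that exactly $N$ directions are exposed rather than $N+1$.
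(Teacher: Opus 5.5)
Your proposal is essentially the paper's proof: a $C^p$ scalar $\varphi$ that is flat near $0$ with affine tails $qs\mp b$, and the chain map $cv_1+\sum_i\varphi(\langle v_i,x\rangle)v_{i+1}$ with adaptively chosen orthonormal directions. As in the paper, the fixed point sits on the positive affine branch, and the geometric-sum ratio is controlled by fixing $m$ first and then taking the scale large ($c$ here, $R$ in the paper). The indexing you flagged works exactly as in the paper, and it also sharpens your $n_0$ remark: choose $v_{t+1}$ orthogonal to $\mathrm{span}\{v_1,\dots,v_t,x_0,\dots,x_t\}$ just before answering the query at $x_t$, so that, since $x_N$ is never queried, $v_{N+1}$ is chosen after $x_N$ and $x_N\perp v_{N+1},\dots,v_m$ follows directly (no ``sealing'' of $v_1$ is needed, and $v_1\perp x_1$ cannot be arranged anyway because $F(0)=cv_1$ reveals $v_1$), while the dimension count requires $n_0=m+N$, not $2N+2$.
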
 
Since Picard iteration guarantees $\|x_N-x_\star\|\le q^N\|x_0-x_\star\|$ for every $q$-contraction, and since $\varepsilon\in(0,1)$ in Theorem~\ref{thm:error} is arbitrary, the theorem pins down the exact deterministic worst-case complexity.

\begin{corollary}\label{cor:minimaxerror}
No deterministic method can improve upon Picard's factor $q^N$ in worst-case error after $N$ oracle queries, even when it is allowed to query all derivatives of $F$ up to any fixed finite order $p$.
\end{corollary}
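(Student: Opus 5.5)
The plan is to formalize the chain construction of Section~\ref{ss:sketch} using a resisting oracle. By translation invariance of the class $\mathcal C_{p,q,L}$ (replace $F$ by $x\mapsto F(x+x_0)-x_0$), we may assume $x_0=0$.

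\textbf{Step 1: the scalar function.} First I construct $\varphi\in C^p(\mathbb R)$ with the following properties:
\begin{itemize}
\item $0\le\varphi'\le q$;
\item $\varphi\equiv 0$ on $[-\delta,\delta]$;
\item $\varphi(s)=qs-b$ for $s\ge \delta'$ (and oddly for negative $s$);
\item $\mathrm{Lip}(\varphi^{(p)})\le L$.
\end{itemize}
I will take a standard $C^\infty$ step $\psi$ that is $0$ on $(-\infty,\delta]$, $1$ on $[\delta',\infty)$ and monotone. Then I set $\varphi'(s)=q\psi(|s|)$ and integrate. The derivative bound $\|D^pF\|$ Lipschitz $\le L$ is achieved by rescaling: $\varphi_\lambda(s)=\lambda\varphi(s/\lambda)$ preserves $\varphi'$ and divides $\mathrm{Lip}(\varphi^{(p)})$ by $\lambda^{p}$. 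So taking $\lambda$ large meets any $L$, at the cost of scaling $b$ and $\delta$ by $\lambda$. Crucially, the final bound is scale-free, so this costs nothing.

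\textbf{Step 2: the map and its regularity.} Set $F(x)=cv_1+\sum_{i=1}^{m-1}\varphi(\langle v_i,x\rangle)v_{i+1}$. Its Jacobian in the basis $v$ is strictly lower bidiagonal with entries $\varphi'(\langle v_i,x\rangle)\in[0,q]$. Since each row has at most one entry, its operator norm is $\max_i|\varphi'|\le q$. Similarly, $D^pF(x)[h_1,\dots,h_p]=\sum_i\varphi^{(p)}(\langle v_i,x\rangle)\prod_j\langle v_i,h_j\rangle\, v_{i+1}$. The norm of its difference at $x,y$ is bounded by
\[
\Bigl(\sum_i |\Delta_i|^2\prod_j\langle v_i,h_j\rangle^2\Bigr)^{1/2}\le L\|x-y\|,
\]
using $|\Delta_i|\le L|\langle v_i,x-y\rangle|$ and $|\langle v_i,h_j\rangle|\le 1$. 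Hence $F\in\mathcal C_{p,q,L}$.

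\textbf{Step 3: the resisting oracle.} The orthonormal vectors are chosen adaptively. The invariant I maintain is: after $t$ queries, $v_1,\dots,v_{t+1}$ are fixed, and every query $x_s$ with $s\le t$ satisfies $|\langle v_i,x_s\rangle|<\delta$ for $i\ge t+1$. Under this invariant, all derivatives of order $\le p$ at $x_s$ depend only on $v_1,\dots,v_{t+1}$, since the terms with $i>t$ vanish identically near $x_s$ ($\varphi$ is flat). Simply choosing $v_{t+2}$ orthogonal to $x_0,\dots,x_{t}$ and to the earlier $v$'s keeps the inner products exactly $0$, which is possible when $n\ge m+N+1$. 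The remaining $v_{N+2},\dots,v_m$ are also chosen orthogonal to all iterates including $x_N$.

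\textbf{Step 4: the fixed point and the bound.} Choose $s_1=c$ and $s_{i+1}=qs_i-b$, with $c$ large so that all $s_i\ge\delta'$ up to $i=m$. Then $\varphi(s_i)=s_{i+1}$, so $x_\star=\sum_i s_iv_i$ is the fixed point. The iterate $x_N$ is orthogonal to $v_{N+2},\dots,v_m$, and $\langle v_{N+1},x_N\rangle$ is controlled by the same argument with one more step of hiding. So
\[
\|x_N-x_\star\|^2\ge\sum_{i>N+1}s_i^2
\]
(or from index $N+1$ if we shift the chain by one).

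The main obstacle is making the ratio $\sum_{i>N}s_i^2/\sum_i s_i^2\ge(1-\varepsilon)^2q^{2N}$. With $b=0$ this is exact geometry, since $s_i=cq^{i-1}$ and $m\to\infty$ makes the tail ratio $q^{2N}$. The offset $b>0$ (forced by the flat region) and the requirement $s_m\ge\delta'$ must be made negligible. I would take $c\gg b\,m$ after the scaling in Step 1, so that $s_i=cq^{i-1}-b(1-q^{i-1})/(1-q)$ is within a factor $1\pm\varepsilon/3$ of $cq^{i-1}$ for $i\le m$, and choose $m$ large enough that the truncated geometric tail is within $\varepsilon/3$. This fixes $n_0=m+N+2$, depending only on $p,q,N,\varepsilon$. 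Getting the index bookkeeping right, so that exactly $N$ leading coordinates are revealed and not $N+1$, is the delicate point. I would handle it by placing the constant $c$ via a hidden first step if needed, i.e.\ by checking that the output $x_N$ knows only $v_1,\dots,v_N$.
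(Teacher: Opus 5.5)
Your construction is the paper's: a $C^p$ scalar $\varphi$ that is flat at the origin and affine with slope $q$ on the tails, the subdiagonal chain map, adaptively chosen orthogonal $v_i$, and a geometric tail estimate. Your regularity checks in Steps 1--2 are fine. As written, however, the argument does not yet deliver the factor $q^N$. The index bookkeeping, which you yourself flag as the delicate point, is left unresolved, and the explicit rule in Step 3 is off by one.

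\emph{The off-by-one in Step 3.} Your invariant is the right one, but your rule does not maintain it. If $v_{t+2}$ is chosen orthogonal only to $x_0,\ldots,x_t$, then $\langle v_{t+2},x_{t+1}\rangle$ may be large. The term $\varphi(\langle v_{t+2},x\rangle)v_{t+3}$ is then visible at the query $x_{t+1}$, which violates the invariant.

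\emph{The resulting bound in Step 4.} Your conclusion $\|x_N-x_\star\|^2\ge\sum_{i>N+1}s_i^2$ only yields $q^{N+1}$, which does not match Picard. Since the corollary is precisely about this factor, the step cannot be left as a hedge.

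\emph{The correct rule.} The paper's rule is: once $x_t$ is known, pick $v_{t+1}\perp\mathrm{span}\{v_1,\ldots,v_t,x_0,\ldots,x_t\}$. Then answer the query at $x_t$ with the derivatives of $F_t(x)=cv_1+\sum_{i=1}^{t}\varphi(\langle v_i,x\rangle)v_{i+1}$. The query at $x_t$ may reveal $v_{t+1}$ through the term $i=t$, but every term with $i\ge t+1$ vanishes to order $p$ at $x_t$. In particular, $v_{N+1}$ never enters the answers at $x_0,\ldots,x_{N-1}$: the term $\varphi(\langle v_N,x\rangle)v_{N+1}$ vanishes there because $v_N\perp x_0,\ldots,x_{N-1}$. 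So $x_N$ depends only on $v_1,\ldots,v_N$, and $v_{N+1}$ can be chosen orthogonal to $x_N$. Hence $x_N\perp v_{N+1},\ldots,v_m$ and $\|x_N-x_\star\|^2\ge\sum_{i=N+1}^{m}s_i^2$. No shift of the chain and no hidden first step for $c$ is needed.

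\emph{The size of $c$.} The prescription $c\gg bm$ is too weak. From $s_i=cq^{i-1}-b(1-q^{i-1})/(1-q)$, the relative deviation from $cq^{i-1}$ at $i=m$ is about $b/((1-q)cq^{m-1})$. You therefore need $cq^{m-1}\gg b/(1-q)$, so $c$ must be exponentially large in $m$. If $c$ is only linear in $m$, then $s_m$ becomes negative for large $m$. In that case both $s_m\ge\delta'$ and your $1\pm\varepsilon/3$ comparison fail.

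The clean fix, as in the paper, is to parametrize by $s_m=R\ge a$ and take $\theta=b/(R(1-q))$ small. This gives $g_i\le s_i\le(1+\theta)g_i$ with $g_i=Rq^{-(m-i)}$. Since $c$ is chosen after $m$, which depends only on $q,N,\varepsilon$, the repair is harmless. To conclude the corollary, combine the resulting bound $(1-\varepsilon)q^N$, valid for every $\varepsilon\in(0,1)$, with Picard's upper bound $q^N$.
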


The rest of this section constructs the hard instance used to prove Theorem~\ref{thm:error}.  The proof is by a smooth finite-order chain construction, which is best viewed as a smooth finite-order analog of the chain constructions used in oracle lower bounds~\cite{nemirovsky1983problem,arjevani2017oracle,drori2022oracle,carmon2021lower,zhou2026sharp}. 

\subsection{An auxiliary scalar function}
We first construct a one-dimensional scalar function illustrated in Figure~\ref{fig:phi}. It behaves like the affine map on the tail, while all of its derivatives up to order $p$ vanish at the origin. This function will later be used to prevent a finite-order oracle query at a point from seeing the future coordinates in the later chain construction. 

\begin{lemma}\label{lem:varphi}
Let $p\in\mathbb N$, $q\in(0,1)$, and $L>0$.  There exist scalars $a>0$, $b>0$, and a function $\varphi\colon\mathbb R\to\mathbb R$ such that:
\begin{enumerate}[label=(\roman*)]
\item $\varphi\in C^{p}(\mathbb R)$,
\item $\varphi^{(k)}(0)=0$ for $0\le k\le p$,
\item $0\le\varphi'(s)\le q$ for every $s\in\mathbb R$,
\item $\mathrm{Lip}(\varphi^{(p)})\le L$,
\item $\varphi(s)=qs-b$ for $s\ge a$, and $\varphi(s)=qs+b$ for $s\le -a$,
\item $\varphi(s)\ge qs-b$ for every $s\in\mathbb R$,
\item $b\ge qa/2$.
\end{enumerate}
\end{lemma}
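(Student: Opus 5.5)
We will build $\varphi$ by prescribing its derivative. Fix a $C^\infty$ (or just $C^{p}$ with Lipschitz $p$-th derivative) nondecreasing function $\psi\colon\mathbb R\to[0,1]$ with $\psi(s)=0$ for $|s|\le 1$, $\psi(s)=1$ for $|s|\ge 2$, and $\psi$ even. A concrete choice is a standard smooth step, e.g. $\psi(s)=h(|s|-1)$ with $h(u)=\frac{e^{-1/u}}{e^{-1/u}+e^{-1/(1-u)}}$ on $(0,1)$, extended by $0$ and $1$. This $\psi$ is smooth because it vanishes near $s=0$. Set, for a scale parameter $r>0$ to be chosen,
\[
\varphi(s)=\int_0^s q\,\psi(t/r)\,dt .
\]
Then $\varphi\in C^\infty$, so (i) holds. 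Since $\varphi\equiv0$ on $[-r,r]$, all derivatives vanish at $0$, which gives (ii). Also $\varphi'=q\psi(\cdot/r)\in[0,q]$, which gives (iii).

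For (v), take $a=2r$. For $s\ge a$ we have $\varphi(s)=qs-b$ with $b:=q\int_0^{2r}(1-\psi(t/r))\,dt=qr\beta$, where $\beta:=\int_0^2(1-\psi(u))\,du\in[1,2]$. Because $\psi$ is even, $\varphi$ is odd, so $\varphi(s)=qs+b$ for $s\le -a$. Property (vii) then reads $qr\beta\ge q(2r)/2=qr$, i.e. $\beta\ge1$. This holds since $1-\psi=1$ on $[0,1]$. For (vi), the function $g(s)=\varphi(s)-(qs-b)$ has $g'=q(\psi-1)\le0$, so $g$ is nonincreasing and equals $0$ for $s\ge a$. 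Hence $g\ge0$ everywhere.

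The one remaining constraint is (iv), and it is handled by scaling. We have $\varphi^{(p)}(s)=q r^{1-p}\psi^{(p-1)}(s/r)$, hence $\mathrm{Lip}(\varphi^{(p)})=q r^{-p}\,\mathrm{Lip}(\psi^{(p-1)})=:q r^{-p}M_p$, with $M_p<\infty$ since $\psi^{(p-1)}$ is smooth with compactly supported derivative. Choosing $r\ge (qM_p/L)^{1/p}$, and also $r\ge1$ to be safe, makes this at most $L$. The main point to check is exactly this scaling identity, together with the observation that the dimensionless conditions (ii), (iii), (v)–(vii) are invariant under the rescaling $s\mapsto s/r$ (with $a,b$ scaling linearly in $r$). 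Nothing else is delicate. If $M_p=0$ were to occur it would only help, but it cannot happen for $p\ge1$ with a nonconstant step.
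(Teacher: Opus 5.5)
Your proof is correct and follows essentially the same route as the paper: you integrate a rescaled even smooth cutoff, obtain (iv) from the scaling identity $\mathrm{Lip}(\varphi^{(p)})=qr^{-p}\mathrm{Lip}(\psi^{(p-1)})$ (with $r\ge 1$ playing the role of the paper's $B_p=\max\{1,\cdot\}$ in keeping the scale positive), get (v) by oddness, and get (vii) from the cutoff vanishing near the origin. The differences are cosmetic: your cutoff switches on over $[1,2]$ rather than $[1/2,1]$, and your one-line argument for (vi) via $g'=q(\psi-1)\le 0$ is tidier than the paper's case split. One small slip is harmless: an even nonconstant $\psi$ cannot be nondecreasing on all of $\mathbb R$, but you never use that property.
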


\begin{proof}
Fix an even $C^\infty$ function $\rho\colon\mathbb R\to[0,1]$ such that
\[
    \rho(u)=0 \quad \text{if } |u|\le \frac12,
    \qquad
    \rho(u)=1 \quad \text{if } |u|\ge 1.
\]
Set
\[
    B_p:=\max\left\{1,\operatorname{Lip}\left(\rho^{(p-1)}\right)\right\},
    \qquad
    a:=\left(\frac{qB_p}{L}\right)^{1/p},
\]
and define
\[
    \varphi(s):=qa\int_0^{s/a}\rho(u)\,du.
\]
The constant $B_p$ is finite. Indeed, $\rho$ is $C^\infty$ and constant outside
$[-1,1]$, so $\rho^{(p)}\in C_c(\mathbb R)$ and
\[
    \operatorname{Lip}\left(\rho^{(p-1)}\right)
    \le \|\rho^{(p)}\|_\infty < \infty.
\]
Since $\rho$ is even, $\varphi$ is odd. Differentiating gives
\[
    \varphi'(s)=q\rho(s/a),
\]
which proves (iii). Moreover, for $k\ge 1$,
\[
    \varphi^{(k)}(s)=qa^{1-k}\rho^{(k-1)}(s/a).
\]
Clearly $\varphi(0)=0$, and since $\rho$ vanishes near the origin, we have
$\varphi^{(k)}(0)=0$ for $1\le k\le p$. This proves (ii).

Furthermore,
\[
    \operatorname{Lip}\left(\varphi^{(p)}\right)
    \le qa^{-p}\operatorname{Lip}\left(\rho^{(p-1)}\right)
    \le qa^{-p}B_p
    = L,
\]
which proves (iv).

Next, set
\[
    b:=qa\int_0^1 (1-\rho(u))\,du.
\]
For $s\ge a$,
\[
    \varphi(s)
    = qa\int_0^1 \rho(u)\,du + q(s-a)
    = qs-b.
\]
By oddness of $\varphi$, we also have
\[
    \varphi(s)=qs+b
    \qquad \text{for } s\le -a.
\]
This proves (v). Since $\rho=0$ on $[0,1/2]$,
\[
    b\ge qa\int_0^{1/2}1\,du
    = \frac{qa}{2},
\]
which proves (vii).

It remains to verify the global lower bound (vi). For $s\ge a$, the inequality
holds with equality. For $0\le s\le a$,
\[
    \varphi(s)-qs+b
    = \int_s^a \bigl(q-\varphi'(u)\bigr)\,du
    \ge 0.
\]
For $s<0$, the bounds $0\le \varphi'\le q$ and $\varphi(0)=0$ imply
\[
    |\varphi(s)|\le q|s|,
\]
and hence
\[
    \varphi(s)\ge qs\ge qs-b.
\]
Thus the global lower bound (vi) holds.
\end{proof}

\subsection{The smooth chain map}

The next lemma constructs the smooth chain structure used in the resisting-oracle argument. Inheriting the properties of the scalar function defined in Lemma~\ref{lem:varphi}, the map is affine along the positive tail containing the fixed point, while all finite-order derivatives vanish in every undiscovered future coordinate when that coordinate is queried at zero.

\begin{lemma}\label{lem:chain-smooth}
Let $p\in\mathbb N$, $q\in(0,1)$, $L>0$, and let $\varphi,a,b$ be as in Lemma~\ref{lem:varphi}.  Let $m\in\mathbb N$, let $R\ge a$, and let $v_1,\ldots,v_m$ be an orthonormal family in $\mathbb R^n$ with $n\ge m$.  Define scalars $s_1,\ldots,s_m$ by $s_m=R$ and
\[
    s_i:=\frac{s_{i+1}+b}{q}\qquad(1\le i\le m-1),
\]
and set $c:=s_1$.  Define
\[
    F(x):=c v_1+\sum_{i=1}^{m-1}\varphi(\langle v_i,x\rangle)v_{i+1}.
\]
Then $F\in\mathcal C_{p,q,L}(\mathbb R^n)$ and $F$ has the unique fixed point
\[
    x_\star:=\sum_{i=1}^m s_i v_i.
\]
\end{lemma}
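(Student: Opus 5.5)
The plan has three parts: (a) verify the derivative bounds that put $F$ in $\mathcal C_{p,q,L}(\mathbb R^n)$, (b) verify that $x_\star$ is a fixed point, and (c) deduce uniqueness from contractivity. Throughout I complete $v_1,\dots,v_m$ to an orthonormal basis and write $y_i=\langle v_i,x\rangle$.

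\emph{Smoothness and the Jacobian bound.} Each summand $x\mapsto\varphi(\langle v_i,x\rangle)v_{i+1}$ is $C^p$ by Lemma~\ref{lem:varphi}(i), so $F\in C^p$. The Jacobian is
\[
DF(x)=\sum_{i=1}^{m-1}\varphi'(y_i)\,v_{i+1}v_i^{\top}.
\]
In the chosen basis this is a matrix with entries only on the subdiagonal. Such a weighted shift maps orthonormal vectors to orthonormal vectors scaled by $\varphi'(y_i)$. Hence $\|DF(x)h\|^2=\sum_i\varphi'(y_i)^2\langle v_i,h\rangle^2\le q^2\|h\|^2$ by Lemma~\ref{lem:varphi}(iii).

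\emph{The Lipschitz bound on $D^pF$.} We have
\[
D^pF(x)[h_1,\dots,h_p]=\sum_{i=1}^{m-1}\varphi^{(p)}(y_i)\prod_{j=1}^p\langle v_i,h_j\rangle\, v_{i+1}.
\]
Set $\delta_i=\varphi^{(p)}(y_i)-\varphi^{(p)}(y_i')$; Lemma~\ref{lem:varphi}(iv) gives $|\delta_i|\le L|y_i-y_i'|$. Because the $v_{i+1}$ are orthonormal, for unit $h_j$,
\[
\bigl\|(D^pF(x)-D^pF(x'))[h_1,\dots,h_p]\bigr\|^2=\sum_i\delta_i^2\prod_j\langle v_i,h_j\rangle^2 .
\]
I bound this using $\prod_j\langle v_i,h_j\rangle^2\le\langle v_i,h_1\rangle^2\le 1$. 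Then
\[
\sum_i\delta_i^2\prod_j\langle v_i,h_j\rangle^2\le \sum_i\delta_i^2\le L^2\sum_i|y_i-y_i'|^2\le L^2\|x-x'\|^2 .
\]
This gives $\mathrm{Lip}(D^pF)\le L$. This step, the multilinear operator-norm estimate, is the only one needing care. The point is that orthonormality makes the cross terms vanish, so the Lipschitz constant does not grow with $m$.

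\emph{Fixed point.} Since $s_m=R\ge a$ and $s_i=(s_{i+1}+b)/q$, induction shows $s_i\ge a$ for all $i$. Indeed, $s_{i+1}\ge a$ gives $s_i\ge (a+b)/q\ge a$. By Lemma~\ref{lem:varphi}(v), $\varphi(s_i)=qs_i-b=s_{i+1}$. Therefore
\[
F(x_\star)=s_1v_1+\sum_{i=1}^{m-1}s_{i+1}v_{i+1}=x_\star .
\]

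\emph{Uniqueness.} By the mean value inequality, $\|DF\|\le q<1$ makes $F$ a $q$-contraction, so by Banach's theorem the fixed point $x_\star$ is unique.
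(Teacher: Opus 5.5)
Your proposal is correct and follows the paper's proof essentially step for step. It uses the same Jacobian computation with orthonormality giving $\|DF(x)\|\le q$, the same appeal to Lemma~\ref{lem:varphi}(v) on the positive tail (your induction $s_i\ge a$ is equivalent to the paper's observation $s_1>\cdots>s_m=R\ge a$), and uniqueness via contraction. The only cosmetic difference is in the $D^pF$ Lipschitz estimate: you bound $\prod_j\langle v_i,h_j\rangle^2\le 1$ and apply Bessel's inequality to $x-x'$, whereas the paper bounds each $|\Delta_i|\le L\|x-y\|$ and applies Bessel's inequality to $u_1$. Both routes give the constant $L$ independently of $m$.
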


\begin{proof}
For $k\in\{1,\ldots,p\}$,
\[
    D^kF(x)[u_1,\ldots,u_k]
    =\sum_{i=1}^{m-1}\varphi^{(k)}(\langle v_i,x\rangle)
      \prod_{\ell=1}^k\langle v_i,u_\ell\rangle\,v_{i+1}.
\]
In particular,
\[
    DF(x)u=\sum_{i=1}^{m-1}\varphi'(\langle v_i,x\rangle)\langle v_i,u\rangle v_{i+1}.
\]
Since $v_2,\ldots,v_m$ are orthonormal and $|\varphi'|\le q$,
\[
    \|DF(x)u\|^2
    =\sum_{i=1}^{m-1}|\varphi'(\langle v_i,x\rangle)|^2|\langle v_i,u\rangle|^2
    \le q^2\|u\|^2.
\]
Thus $\|DF(x)\|\le q$ and $F$ is a $q$-contraction.

For unit vectors $u_1,\ldots,u_p$, write
\[
    \Delta_i:=\varphi^{(p)}(\langle v_i,x\rangle)-\varphi^{(p)}(\langle v_i,y\rangle)
\]
for some $x,y\in \mathbb{R}^n$. Then $|\Delta_i|\le L|\langle v_i,x-y\rangle|\le L\|x-y\|$, and therefore
\begin{align*}
\|(D^pF(x)-D^pF(y))[u_1,\ldots,u_p]\|^2
&=\sum_{i=1}^{m-1}|\Delta_i|^2\prod_{\ell=1}^p|\langle v_i,u_\ell\rangle|^2 \\
&\le L^2\|x-y\|^2\sum_{i=1}^{m-1}|\langle v_i,u_1\rangle|^2 \\
&\le L^2\|x-y\|^2.
\end{align*}
Taking the supremum over unit $u_1,\ldots,u_p$ gives $\mathrm{Lip}(D^pF)\le L$.

Next, since $s_i=(s_{i+1}+b)/q>s_{i+1}$, we have $s_1>\cdots>s_m=R\ge a$.  Hence
\[
    \varphi(s_i)=qs_i-b=s_{i+1}\qquad(1\le i\le m-1).
\]
Consequently,
\[
    F(x_\star)=s_1v_1+\sum_{i=1}^{m-1}\varphi(s_i)v_{i+1}
    =s_1v_1+\sum_{i=1}^{m-1}s_{i+1}v_{i+1}=x_\star.
\]
The fixed point is unique because $F$ is a contraction.
\end{proof}

\subsection{Proof of the exact fixed-point-error lower bound}\label{sec:proof-main}

We now prove the main result, Theorem~\ref{thm:error}.

\begin{proof}[Proof of Theorem~\ref{thm:error}]
Let $\varphi,a,b$ be given by Lemma~\ref{lem:varphi}.  Choose $m\ge N+1$ large enough so that
\[
    \left(\frac{1-q^{2(m-N)}}{1-q^{2m}}\right)^{1/2}\ge 1-\frac{\varepsilon}{2}.
\]
Such an $m$ exists since $q\in(0,1)$ and, for fixed $N$,
\[
\left(\frac{1-q^{2(m-N)}}{1-q^{2m}}\right)^{1/2}\to 1
\qquad
\text{as } m\to\infty .
\]
Then choose $R\ge a$ large enough so that, with
\[
    \theta:=\frac{b}{R(1-q)},
\]
we have
\[
    \frac{1}{1+\theta}\ge 1-\frac{\varepsilon}{2}.
\]
Set $n\ge n_0:=m+N$.  Define $s_m:=R$ and
\[
    s_i:=\frac{s_{i+1}+b}{q}\qquad(1\le i\le m-1),
\]
and set $c:=s_1$.

Fix the deterministic method on $\mathbb{R}^n$ and set $x_0=0$. We now use a resisting-oracle technique to choose $v_1,\dots,v_{N+1}$ inductively. Suppose that at iteration $t\in\{0,\ldots,N\}$ the points $x_0,\ldots,x_t$ are known and $v_1,\ldots,v_t$ have been chosen orthonormally.  Choose $v_{t+1}$ to be any unit vector orthogonal to
\[
    \mathrm{span}\{v_1,\ldots,v_t,x_0,\ldots,x_t\}.
\]
This is possible because $x_0=0$, so the dimension of the span is at most $2t$, and $n=m+N\ge2N+1>2t$.

After choosing $v_{t+1}$, define the truncated map
\[
    F_t(x):=c v_1+\sum_{i=1}^{t}\varphi(\langle v_i,x\rangle)v_{i+1},
\]
where the sum is empty when $t=0$.  For $t<N$, answer the method's query at $x_t$ with the derivatives up to order $p$ of $F_t$ at $x_t$.  Since the method is deterministic, this fixes $x_{t+1}$. Note that the choice at $t=N$ is used only to define $v_{N+1}$.

After $x_N$ and $v_{N+1}$ have been fixed, choose $v_{N+2},\ldots,v_m$ orthonormally in the orthogonal complement of
\[
    \mathrm{span}\{v_1,\ldots,v_{N+1},x_0,\ldots,x_N\}.
\]
This is possible because the complement has dimension at least $n-2N-1\ge m-N-1$. Now define the map
\[
    F(x):=c v_1+\sum_{i=1}^{m-1}\varphi(\langle v_i,x\rangle)v_{i+1}.
\]
By Lemma~\ref{lem:chain-smooth}, $F\in\mathcal C_{p,q,L}(\mathbb R^n)$ and
\[
    x_\star:=\sum_{i=1}^m s_i v_i
\]
is its unique fixed point.

We claim that the simulated transcript is exactly the transcript of the final map $F$ through the queries that determine $x_N$.   Fix $t\in\{0,\ldots,N-1\}$.  If $i>t$, then $v_i$ is orthogonal to $x_t$: either $v_i$ was chosen later against a span containing $x_t$, or it was chosen in the final tail subspace orthogonal to all $x_0,\ldots,x_N$.  Hence $\langle v_i,x_t\rangle=0$.  Since $\varphi^{(k)}(0)=0$ for $0\le k\le p$, every term with $i>t$ contributes zero to $D^kF(x_t)$ for $0\le k\le p$.  Therefore
\[
    D^kF(x_t)=D^kF_t(x_t)\qquad(0\le k\le p,
    \;0\le t<N).
\]
Thus $x_1,\ldots,x_N$ are exactly the iterates produced by the method when run on the final map $F$.

Now by construction,
\[
    x_N\perp v_i\qquad\text{for all } i\ge N+1.
\]
Therefore
\[
    \|x_N-x_\star\|^2\ge \sum_{i=N+1}^{m}s_i^2.
\]
Define
\[
    g_i:=Rq^{-(m-i)}\qquad(1\le i\le m).
\]
We compare $s_i$ with $g_i$.  Since $s_m=g_m=R$, iterating the backward recursion gives
\[
    s_i=Rq^{-(m-i)}+b\sum_{r=1}^{m-i}q^{-r}
    =g_i+b\sum_{r=1}^{m-i}q^{-r}.
\]
Thus $s_i\ge g_i$.  On the other hand, we have
\[
    \sum_{r=1}^{m-i}q^{-r}=
    q^{-(m-i)}(1+q+\cdots+q^{m-i-1})\le\frac{q^{-(m-i)}}{1-q}.
\]
Therefore
\[
    s_i\le g_i\left(1+\frac{b}{R(1-q)}\right)=(1+\theta)g_i
    \qquad(1\le i\le m).
\]
Using this comparison,
\[
    \frac{\|x_N-x_\star\|^2}{\|x_\star\|^2}
    \ge
    \frac{\sum_{i=N+1}^{m}s_i^2}{\sum_{i=1}^{m}s_i^2}
    \ge
    \frac{\sum_{i=N+1}^{m}g_i^2}{(1+\theta)^2\sum_{i=1}^{m}g_i^2}.
\]
Taking square roots gives
\[
    \frac{\|x_N-x_\star\|}{\|x_\star\|}
    \ge
    \frac{1}{1+\theta}
    \left(\frac{\sum_{i=N+1}^{m}g_i^2}{\sum_{i=1}^{m}g_i^2}\right)^{1/2}.
\]
Since $g_i^2=R^2q^{-2(m-i)}$, the ratio of geometric sums is
\[
    \frac{\sum_{i=N+1}^{m}g_i^2}{\sum_{i=1}^{m}g_i^2}
    =\frac{\sum_{i=N+1}^{m}q^{2i}}{\sum_{i=1}^{m}q^{2i}}
    =q^{2N}\frac{1-q^{2(m-N)}}{1-q^{2m}}.
\]
Consequently,
\[
    \frac{\|x_N-x_\star\|}{\|x_\star\|}
    \ge
    \frac{1}{1+\theta}q^N
    \left(\frac{1-q^{2(m-N)}}{1-q^{2m}}\right)^{1/2}
    \ge
    \left(1-\frac{\varepsilon}{2}\right)^2q^N
    \ge (1-\varepsilon)q^N.
\]
Finally, since $x_0=0$, we have $\|x_\star\|=\|x_0-x_\star\|$, and hence
\[
    \|x_N-x_\star\|\ge (1-\varepsilon)q^N\|x_0-x_\star\|.
\]
For an arbitrary starting point $x_0\in\mathbb{R}^n$, apply the above
construction in the translated coordinates $z:=x-x_0$: if $F$ is
the map constructed above, define
\[
    F_{x_0}(x):=x_0+F(x-x_0).
\]
Then $F_{x_0}\in\mathcal C_{p,q,L}(\mathbb{R}^n)$, and its unique fixed point is
\[
    x_\star^{x_0}:=x_0+x_\star.
\]
The derivatives of $F_{x_0}$ are just the translated derivatives of
$F$, so the resisting-oracle argument applies verbatim.
\end{proof}

\section{Residual lower bound for contractive and nonexpansive maps}\label{sec:residual}
In the practical use of fixed-point iterations, the fixed-point residual
$\|x-Tx\|$ is computable while the distance to the solution $\|x-x_\star\|$ is
not, so the residual serves as the practical termination criterion. For
nonexpansive operators (the case $q=1$), prior work established matching
$\Theta(1/N)$ complexity bounds on the residual in the value-oracle model: the
rate $\frac{2}{N+1}\|x_0-x_\star\|$ is attained by the Halpern iteration
\cite{lieder2021convergence}, and matching lower bounds were given in
\cite{contreras2023optimal,park2022exact}. For $q$-contractive operators with
$q\in(0,1)$, the exact optimal value-oracle rate is
$(1+q)\bigl(\sum_{j=0}^{N}q^{-j}\bigr)^{-1}\|x_0-x_\star\|$, with the upper
bound attained by the method OC-Halpern and a matching lower bound established in
\cite{park2022exact}.

In this section, we show that these exact residual lower bounds continue to
hold for any fixed finite oracle order, using a construction similar to that
of Section~\ref{sec:chain}. The exact residual lower bound is formalized as follows:

\begin{theorem}\label{thm:residual}
Let $p,N\in\mathbb N$, $q\in(0,1)$, and $L>0$.  Set $n\ge 2N+1$. For every $x_0\in \mathbb{R}^n$, for every deterministic $p$-th order method initialized at $x_0$, and for $\varepsilon \in (0,1)$, there exists  a map $F\in\mathcal C_{p,q,L}(\mathbb R^n)$ with a unique fixed point $x_\star$ such that the method's $N$-th iterate $x_N$ satisfies
\[
    \|x_N-F(x_N)\|\ge (1-\varepsilon)(1+q)\left(\sum_{j=0}^{N}q^{-j}\right)^{-1}\|x_0-x_\star\|.
\]
\end{theorem}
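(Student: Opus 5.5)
The plan is to adapt the resisting-oracle chain of Section~\ref{sec:chain}, but with the cyclic ``closed'' chain from Section~\ref{ss:sketch}:
\[
F(x)=c v_1-\varphi(\langle v_{N+1},x\rangle)v_1+\sum_{i=1}^{N}\varphi(\langle v_i,x\rangle)v_{i+1},
\]
using only $N+1$ orthonormal directions. As before, we first take $x_0=0$ and handle general $x_0$ by the translation $F_{x_0}(x)=x_0+F(x-x_0)$.

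\textbf{Step 1 (membership in $\mathcal C_{p,q,L}$).} The proof copies Lemma~\ref{lem:chain-smooth}. The derivative is
\[
DF(x)u=-\varphi'(\langle v_{N+1},x\rangle)\langle v_{N+1},u\rangle v_1+\sum_{i\le N}\varphi'(\langle v_i,x\rangle)\langle v_i,u\rangle v_{i+1}.
\]
This is a weighted cyclic shift in the orthonormal basis, so each output coordinate involves exactly one input coordinate with weight at most $q$ in absolute value. Hence $\|DF\|\le q$. The Lipschitz bound on $D^pF$ follows by the same computation as in Lemma~\ref{lem:chain-smooth}.

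\textbf{Step 2 (fixed point on the affine branch).} Fix $R\ge a$ large, set $s_{N+1}=R$, and define $s_i=(s_{i+1}+b)/q$ for $i\le N$. Then $s_1>\dots>s_{N+1}\ge a$. Choose
\[
c:=s_1+\varphi(s_{N+1})=s_1+qR-b,
\]
so that $x_\star=\sum_{i=1}^{N+1}s_iv_i$ is the unique fixed point. As $R\to\infty$ with $b$ fixed, $s_i=Rq^{-(N+1-i)}(1+O(b/R))$. Consequently
\[
c/s_1\to 1+q^{N+1},\qquad \|x_\star\|/s_1\to\Bigl(\sum_{i=0}^{N}q^{2i}\Bigr)^{1/2}.
\]

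\textbf{Step 3 (resisting oracle).} As in the proof of Theorem~\ref{thm:error}, choose $v_{t+1}$ orthogonal to $\mathrm{span}\{v_1,\dots,v_t,x_0,\dots,x_t\}$ for $t=0,\dots,N$. This is possible since $n\ge 2N+1$. Answer each query with the truncated map $F_t$. Because $\varphi^{(k)}(0)=0$ for $0\le k\le p$, the terms involving undiscovered directions (including the wrap-around term in $v_{N+1}$) vanish to order $p$ at every $x_t$ with $t<N$. The simulated transcript is therefore the true one.

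\textbf{Step 4 (the residual inequality; the main step).} Write $y_i=\langle v_i,x_N\rangle$, and note $y_{N+1}=0$, so $\varphi(y_{N+1})=0$. Define the residual components
\[
r_0=c-y_1,\qquad r_i=\varphi(y_i)-y_{i+1}\quad(1\le i\le N).
\]
Then $\|x_N-F(x_N)\|^2\ge\sum_{j=0}^N r_j^2$. The key tool is the one-sided bound (vi), $\varphi(y)\ge qy-b$, valid for \emph{all} $y$. It gives $y_i\le (y_{i+1}+r_i+b)/q$ with signs preserved. Unrolling from $y_{N+1}=0$ yields
\[
c=y_1+r_0\le\sum_{j=0}^{N}q^{-j}r_j+bK_N,\qquad K_N=\sum_{j=1}^N q^{-j}.
\]
By Cauchy--Schwarz,
\[
\|x_N-F(x_N)\|\ge (c-bK_N)\Bigl(\sum_{j=0}^N q^{-2j}\Bigr)^{-1/2}.
\]

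\textbf{Step 5 (identifying the constant).} Divide by $\|x_\star\|$ and let $R\to\infty$; this makes $bK_N/c\to 0$ and the $O(b/R)$ errors vanish. The ratio tends to
\[
\frac{1+q^{N+1}}{\bigl(\sum_{j=0}^N q^{-2j}\bigr)^{1/2}\bigl(\sum_{i=0}^N q^{2i}\bigr)^{1/2}}=\frac{q^N(1+q^{N+1})}{\sum_{i=0}^Nq^{2i}}=\frac{q^N(1-q^2)}{1-q^{N+1}}=(1+q)\Bigl(\sum_{j=0}^N q^{-j}\Bigr)^{-1}.
\]
Choosing $R$ large enough then gives the factor $1-\varepsilon$.

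The delicate point is Step 4. There we must avoid assuming the iterate $x_N$ lies on the affine branch; the global inequality (vi) is exactly what makes the telescoping valid for arbitrary $y_i$.
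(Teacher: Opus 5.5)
Your proof is correct. Steps 1--3 coincide with the paper: the same cyclic chain on $N+1$ directions, the same $c=s_1+qR-b$, and the same resisting oracle. Step 5 is the paper's $\theta,\delta$ bookkeeping done as a limit. That is legitimate because your Step 4 bound holds uniformly on the hidden hyperplane $H=\{x:\langle v_{N+1},x\rangle=0\}$, so $R$ can be fixed before the oracle is run.

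The real difference is Step 4. The paper first computes the exact minimum over $H$ of the residual of the affine proxy $F_{\rm aff}$ (Lemma~\ref{lem:affine-lowerbound}, projecting onto $w=\sum_{i=1}^{N+1}q^{N+1-i}v_i$). Then, in Lemma~\ref{lem:hyperplane}, it argues by contradiction that a point of $H$ with small residual must have every revealed coordinate in the affine region $[a,\infty)$, where $F=F_{\rm aff}-bv_1$. This needs the extra condition $R(1-q)>a+b$ and loses an additive $b$.

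You instead telescope the global one-sided bound (vi) backward from $y_{N+1}=0$ and apply Cauchy--Schwarz with weights $q^{-j}$, which is $w$ up to scaling. So you never need to localize $x_N$ to the affine branch. Since $s_1=Rq^{-N}+bK_N$, your estimate reads
\[
\|x-F(x)\|\ge\frac{(1+q^{N+1})R-bq^{N}}{\sqrt{S_2}}\qquad\text{for all }x\in H,\quad S_2=\sum_{j=0}^{N}q^{2j}.
\]
This is slightly sharper than the paper's $\frac{1+q^{N+1}}{\sqrt{S_2}}R-b$ and requires only $R\ge a$.

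In fact, the paper's own forward induction $x_i\ge s_i+q^iR-q^{i-1}b+\sum_{k=1}^{i}q^{i-k}r_k$, pushed one step further to $i=N+1$ where $x_{N+1}=0$, gives exactly your inequality. So the affine lemma and the contradiction argument can be bypassed entirely. What the paper's route adds is the identification of $\frac{1+q^{N+1}}{\sqrt{S_2}}R$ as the exact minimum for the affine proxy on $H$, a tightness statement the theorem itself does not need.
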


Note the following residual guarantee of the Picard iteration.
\begin{proposition}[Picard residual bound \cite{ortega2000iterative}]\label{prop:picard-residual}
Let $G\colon\mathbb R^n\to\mathbb R^n$ be $C^1$, let $x_\star$ be the fixed point of $G$, and assume $\|DG(x)\|\le q<1$ for all $x$.  Then Picard iteration $x_{t+1}=G(x_t)$ satisfies
\[
    \|x_N-G(x_N)\|\le (1+q)q^N\|x_0-x_\star\|.
\]
This bound is tight in the sense that there exists an operator $G^\star$ such that 
\[
    \|x_N-G^\star(x_N)\| = (1+q)q^N\|x_0-x_\star\|.
\]
\end{proposition}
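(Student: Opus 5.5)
The plan is to combine the triangle inequality with the contraction estimate \eqref{eq:picardupperbound}, and then check tightness on an explicit linear map whose Jacobian is $-qI$.

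\textbf{Contraction.} First I would record that the hypothesis $\|DG(x)\|\le q$ makes $G$ a $q$-contraction. For any $x,y$, write
\[
G(x)-G(y)=\int_0^1 DG\bigl(y+\tau(x-y)\bigr)(x-y)\,d\tau .
\]
This gives $\|G(x)-G(y)\|\le q\|x-y\|$. In particular, $G$ has the unique fixed point $x_\star$, and the Picard estimate $\|x_N-x_\star\|\le q^N\|x_0-x_\star\|$ from \eqref{eq:picardupperbound} applies. That estimate itself follows by induction from $\|x_{t+1}-x_\star\|=\|G(x_t)-G(x_\star)\|\le q\|x_t-x_\star\|$.

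\textbf{Upper bound.} Insert the fixed point and use the triangle inequality:
\[
\|x_N-G(x_N)\|\le \|x_N-x_\star\|+\|G(x_\star)-G(x_N)\|\le (1+q)\|x_N-x_\star\|\le (1+q)q^N\|x_0-x_\star\|.
\]

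\textbf{Tightness.} Equality in the triangle step requires $G(x_N)-x_\star$ to point opposite to $x_N-x_\star$ with exact ratio $q$. This suggests the reflecting linear contraction $G^\star(x):=-qx$ on $\mathbb R^n$. It is $C^1$ with $DG^\star\equiv -qI$, so $\|DG^\star(x)\|=q$, and its fixed point is $x_\star=0$. Picard iteration gives $x_N=(-q)^N x_0$, hence
\[
x_N-G^\star(x_N)=(1+q)x_N .
\]
Therefore $\|x_N-G^\star(x_N)\|=(1+q)q^N\|x_0\|=(1+q)q^N\|x_0-x_\star\|$ for every $x_0$, which is the claimed equality.

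There is no real obstacle here. The only step needing a little care is choosing an extremal operator that attains equality in both inequalities simultaneously. A positive scaling such as $G(x)=qx$ would only give the factor $(1-q)$, so the negative sign in $G^\star$ is essential.
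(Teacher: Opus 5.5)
Your proof is correct: routing the triangle inequality through $x_\star=G(x_\star)$ and applying the contraction estimate gives the upper bound, and $G^\star(x)=-qx$ attains equality for every $x_0$ and $N$. The paper gives no proof of its own and simply cites Ortega and Rheinboldt, so your argument is the standard one behind that citation, and it also names an explicit extremal operator, which the paper leaves unspecified.
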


Comparing the lower bound of Theorem~\ref{thm:residual} with the upper bound
of Proposition~\ref{prop:picard-residual}, we have
\[
(1-q)\!\!\!\!
\underbrace{(1+q)q^N}_{\substack{\text{upper bound of}\\ \text{Proposition~\ref{prop:picard-residual}}}}
\le
\underbrace{(1+q)\left(\sum_{j=0}^{N}q^{-j}\right)^{-1}}_{\substack{\text{lower bound of}\\ \text{ Theorem~\ref{thm:residual} as }\varepsilon\to 0}}
\le
\underbrace{(1+q)q^N}_{\substack{\text{upper bound of}\\ \text{Proposition~\ref{prop:picard-residual}}}}\!\!\!\!.
\]
Thus, for each fixed $q\in(0,1)$, the worst-case residual rate $O(q^N)$ of
Picard iteration is optimal up to the factor $1-q$.

But in fact, the lower bound
of Theorem~\ref{thm:residual} is tight, as it exactly matches the residual bound
of OC-Halpern~\cite{park2022exact}:

\begin{proposition}[OC-Halpern residual bound \cite{park2022exact}]\label{prop:halpern}
Let $q\in(0,1]$, and let $F\colon\mathbb R^n\to\mathbb R^n$ be a $q$-contraction with a fixed point $x_\star$. Then the iterates $\{x_N\}_{N\ge0}$ of OC-Halpern satisfy
\[
    \|x_N-F(x_N)\| \le (1+q)\left(\sum_{j=0}^{N}q^{-j}\right)^{-1}\|x_0-x_\star\|.
\]
\end{proposition}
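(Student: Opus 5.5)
The proof I would give is a \emph{certificate} (Lyapunov / performance-estimation) argument. It uses only the $q$-contraction inequalities
\[
\|F(x)-F(y)\|^2\le q^2\|x-y\|^2
\]
evaluated at the iterates $x_0,\dots,x_N$ and at $x_\star$; no smoothness of $F$ is needed. First I would recall the OC-Halpern recursion from \cite{park2022exact}:
\[
x_{k+1}=\Bigl(1-\tfrac{1}{\varphi_{k+1}}\Bigr)F(x_k)+\tfrac{1}{\varphi_{k+1}}x_0,\qquad \varphi_k=\sum_{i=0}^{k}q^{-2i}.
\]
These anchoring weights are what the certificate will use. I would write $g_k:=x_k-F(x_k)$ and express every iterate as an affine combination of $x_0$ and $F(x_0),\dots,F(x_{k-1})$. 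The goal is then a finite-dimensional quadratic inequality in the vectors $x_0-x_\star$ and $g_0,\dots,g_N$.

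The key step is an identity of the form
\begin{align*}
\Bigl(\sum_{j=0}^{N}q^{-j}\Bigr)^2\|g_N\|^2-(1+q)^2\|x_0-x_\star\|^2
&+\sum_{k=0}^{N-1}\lambda_k\bigl(q^2\|x_k-x_{k+1}\|^2-\|F(x_k)-F(x_{k+1})\|^2\bigr)\\
&+\sum_{k=0}^{N}\mu_k\bigl(q^2\|x_k-x_\star\|^2-\|F(x_k)-x_\star\|^2\bigr)
=-\Bigl\|\sum_{k}\alpha_k g_k+\beta(x_0-x_\star)\Bigr\|^2 .
\end{align*}
Here $\lambda_k,\mu_k\ge0$ are explicit multipliers (geometric in $q^{-k}$), and I would use $F(x_\star)=x_\star$. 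Since each bracketed term is nonnegative by contractivity, this identity immediately yields the bound. Equivalently, I would build a potential
\[
V_k=A_k\|g_k\|^2+B_k\langle g_k,x_k-x_0\rangle+\dots
\]
that is nonincreasing thanks to one consecutive-pair inequality per step, and then close the argument at $k=N$ using the inequalities involving $x_\star$.

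To find the multipliers I would follow \cite{park2022exact}. I would take the optimal dual solution of the performance-estimation SDP for small $N$, guess the closed forms, and verify the sum-of-squares identity by induction on $N$. The induction step uses the recursion $\varphi_{k+1}=\varphi_k+q^{-2(k+1)}$ to show that the coefficients match.

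I expect the main obstacle to be this coefficient matching: making the residual square vanish exactly, so that the constant is $(1+q)\bigl(\sum_{j=0}^{N}q^{-j}\bigr)^{-1}$ and not something weaker. The first thing I would try is reparametrizing in terms of $\varphi_k$, since it makes the cross terms between $g_k$ and $g_{k+1}$ telescope. As a sanity check I would take $q\to1$ and confirm that the result reduces to the known $2/(N+1)$ Halpern certificate of \cite{lieder2021convergence}.
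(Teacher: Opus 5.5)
There is no proof in the paper to compare against: this proposition is quoted from \cite{park2022exact}. Your proposal therefore has to stand on its own, and as written it is a plan rather than a proof. In a certificate argument the only nontrivial content is the explicit multipliers (equivalently, the potential) together with a check that the identity holds for every $N$. You defer exactly this to numerics (``guess the closed forms''). Duality alone only guarantees some certificate, which may use non-consecutive pairs $(x_i,x_j)$ and a slack of rank larger than one. Nothing in the proposal shows that nonnegative $\lambda_k,\mu_k$ of the shape you posit exist, with a single-square slack, and that existence is the whole theorem.

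Your structural guesses are in fact right, and the following closes the gap. Let $g_k=x_k-F(x_k)$ and use $\varphi_{k+1}=1+q^{-2}\varphi_k$. Set
\[
V_k:=q^{2k}\varphi_k\Bigl[(\varphi_k-1)\|g_k\|^2+2\langle g_k,x_k-x_0\rangle-(1-q^2)\|x_k-x_0\|^2\Bigr],
\]
so that $V_0=0$. Substituting $x_{k+1}-x_0=(1-\varphi_{k+1}^{-1})(x_k-x_0-g_k)$, one verifies the exact identity
\[
V_{k+1}=V_k-q^{2k}\varphi_k\varphi_{k+1}\bigl(q^2\|x_k-x_{k+1}\|^2-\|F(x_k)-F(x_{k+1})\|^2\bigr),
\]
hence $V_N\le 0$. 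Two remarks on your sketch. First, the $\|x_k-x_0\|^2$ term hidden in your ``$\dots$'' is needed when $q<1$. Second, the consecutive multipliers $q^{2k}\varphi_k\varphi_{k+1}$ are products of geometric sums, not geometric.

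For the terminal step only the pair $(x_N,x_\star)$ is used, so $\mu_k=0$ for $k<N$. For $q<1$, let $t=(1+q^{N+1})/\varphi_N$ and $\mu=(1+q^{-N-1})/\varphi_N$. Then the quantity
\[
t\,\frac{V_N}{q^{2N}\varphi_N}+\mu\bigl(\|F(x_N)-x_\star\|^2-q^2\|x_N-x_\star\|^2\bigr)
\]
equals $\|g_N\|^2-(q+q^{-N})^2\varphi_N^{-2}\|x_0-x_\star\|^2$ plus a single square, obtained by completing the square in $x_N-x_0$. This quantity is $\le 0$, and $(q+q^{-N})/\varphi_N=(1+q)\bigl(\sum_{j=0}^Nq^{-j}\bigr)^{-1}$, so the bound follows. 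For $q=1$ the potential reduces to the standard Halpern potential $(k+1)\bigl[k\|g_k\|^2+2\langle g_k,x_k-x_0\rangle\bigr]$, and the terminal step is Cauchy--Schwarz.
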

\begin{corollary}\label{cor:minimaxerrorresidual}
Let $q\in(0,1]$. No deterministic method can improve upon OC-Halpern's factor
$(1+q)\bigl(\sum_{j=0}^{N}q^{-j}\bigr)^{-1}$ in worst-case residual after $N$
oracle queries, even when it is allowed to query all derivatives of $F$ up to
any fixed finite order $p$. In particular, in the nonexpansive case $q=1$,
the factor equals $\frac{2}{N+1}$, so the $O(1/N)$ residual rate of the
Halpern iteration~\cite{lieder2021convergence} likewise cannot be improved by
finite-order methods.
\end{corollary}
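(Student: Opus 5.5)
The plan is to derive the corollary by comparing Theorem~\ref{thm:residual} with Proposition~\ref{prop:halpern}, handling $q\in(0,1)$ directly and $q=1$ by a limiting argument. Throughout, \emph{improve upon} means: there is no deterministic $p$-th order method and constant $\gamma<(1+q)\bigl(\sum_{j=0}^{N}q^{-j}\bigr)^{-1}$ such that $\|x_N-F(x_N)\|\le\gamma\|x_0-x_\star\|$ for every admissible $F$. Here admissible means $F\in\mathcal C_{p,q,L}(\mathbb R^n)$, or in the case $q=1$ a nonexpansive $C^p$ map with $\mathrm{Lip}(D^pF)\le L$ and a fixed point.

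\emph{Case $q\in(0,1)$.} Suppose such a method and $\gamma$ exist, and set $\kappa_q:=(1+q)\bigl(\sum_{j=0}^{N}q^{-j}\bigr)^{-1}$. Choose $\varepsilon\in(0,1)$ with $(1-\varepsilon)\kappa_q>\gamma$. Theorem~\ref{thm:residual}, applied with $n\ge 2N+1$, gives some $F\in\mathcal C_{p,q,L}(\mathbb R^n)$ whose unique fixed point $x_\star$ satisfies $\|x_N-F(x_N)\|\ge(1-\varepsilon)\kappa_q\|x_0-x_\star\|$. The theorem places no restriction on $x_0$, so we may take $x_0$ with $x_0\ne x_\star$ (for instance $x_0=0$ with the chain construction). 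Then $(1-\varepsilon)\kappa_q\|x_0-x_\star\|>\gamma\|x_0-x_\star\|$, which contradicts the assumed guarantee. Proposition~\ref{prop:halpern} shows that OC-Halpern, a value-oracle method and hence also a $p$-th order method, attains $\kappa_q$. So $\kappa_q$ is exactly the minimax factor.

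\emph{Case $q=1$.} First compute $\kappa_1=2/(N+1)$, since the sum has $N+1$ terms equal to $1$. Suppose a method guarantees the bound with some $\gamma<2/(N+1)$ on the nonexpansive class. Every $F\in\mathcal C_{p,q',L}$ with $q'<1$ is nonexpansive, is $C^p$, has $\mathrm{Lip}(D^pF)\le L$, and has a fixed point, so it belongs to that class. The map $q'\mapsto\kappa_{q'}$ is continuous on $(0,1]$, so I would choose $q'<1$ and $\varepsilon$ with $(1-\varepsilon)\kappa_{q'}>\gamma$. Applying Theorem~\ref{thm:residual} with $q'$ then gives a nonexpansive instance violating the guarantee, as in the first case. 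The upper bound $2/(N+1)$ comes from Proposition~\ref{prop:halpern} at $q=1$, equivalently from the Halpern rate of \cite{lieder2021convergence}.

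The only delicate point is the $q=1$ case. Theorem~\ref{thm:residual} is stated only for $q<1$, and its $\varepsilon$-loss must be combined with the $q'\to1$ limit; I would pick $q'$ first and then $\varepsilon$ so that both losses fit inside the gap $2/(N+1)-\gamma$. I would also note that the dimension requirement $n\ge 2N+1$ does not depend on $q'$, so the same ambient space $\mathbb R^n$ works throughout.
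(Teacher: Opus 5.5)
Your proposal is correct and follows the paper's route. The paper gives no separate proof of this corollary; it obtains it by combining Theorem~\ref{thm:residual} (with $\varepsilon$ arbitrary) with the OC-Halpern upper bound of Proposition~\ref{prop:halpern}, and it handles $q=1$ by letting $q\to1$, using that maps in $\mathcal C_{p,q',L}$ with $q'<1$ are nonexpansive. Your explicit order of choices ($q'$ first, then $\varepsilon$), the continuity of the factor in $q$, and your check that the chain construction gives $x_\star\neq x_0$ (since every $s_i\ge R>0$) supply details the paper leaves implicit.
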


The rest of this section is devoted to the proof of Theorem~\ref{thm:residual}. We first prove some lemmas for the residual lower bound setting, and then combine this result with the resisting-oracle choice of the directions $v_1,\ldots,v_{N+1}$ as in the proof of Theorem~\ref{thm:error}.

\subsection{Residual lower bound on the hidden hyperplane}
 First, we construct a smooth chain structure as in Lemma~\ref{lem:chain-smooth}. The idea is similar to that of the previous section, but the mapping is defined slightly differently.

\begin{lemma}\label{lem:cyclic-chain-smooth}
Let $p\in\mathbb N$, $q\in(0,1)$, $L>0$, and let $\varphi,a,b$ be as in
Lemma~\ref{lem:varphi}.  Let $m\in\mathbb N$, let $R\ge a$, and let
$v_1,\ldots,v_m$ be an orthonormal family in $\mathbb R^n$ with $n\ge m$.
Define scalars $s_1,\ldots,s_m$ by $s_m=R$ and
\[
    s_i:=\frac{s_{i+1}+b}{q}\qquad(1\le i\le m-1).
\]
Set
\[
    c:=s_1+qR-b.
\]
Define
\[
    F(x):=
    c v_1 -\varphi(\langle v_m,x\rangle)v_1+\sum_{i=1}^{m-1}\varphi(\langle v_i,x\rangle)v_{i+1}.
\]
Then $F\in\mathcal C_{p,q,L}(\mathbb R^n)$ and $F$ has the unique fixed point
\[
    x_\star:=\sum_{i=1}^m s_i v_i.
\]
\end{lemma}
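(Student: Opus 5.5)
The plan is to follow the proof of Lemma~\ref{lem:chain-smooth} almost verbatim. The only new feature is the cyclic term $-\varphi(\langle v_m,x\rangle)v_1$. The key observation is that in the basis $v_1,\ldots,v_m$ the map is still a weighted shift. Each input coordinate $\langle v_i,x\rangle$, now for all $i=1,\ldots,m$, feeds exactly one output direction: $v_{i+1}$ for $i<m$, and $v_1$ for $i=m$. These output directions are pairwise distinct and orthonormal. So every norm computation in Lemma~\ref{lem:chain-smooth} goes through with the sum over $i$ extended from $m-1$ to $m$, and a sign that disappears after squaring.

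\textbf{Derivatives and the contraction bound.} For $1\le k\le p$ I will write
\[
D^kF(x)[u_1,\ldots,u_k]=-\varphi^{(k)}(\langle v_m,x\rangle)\prod_{\ell=1}^k\langle v_m,u_\ell\rangle\,v_1+\sum_{i=1}^{m-1}\varphi^{(k)}(\langle v_i,x\rangle)\prod_{\ell=1}^k\langle v_i,u_\ell\rangle\,v_{i+1}.
\]
For $k=1$, orthonormality of $v_1,\ldots,v_m$ gives
\[
\|DF(x)u\|^2=\sum_{i=1}^m|\varphi'(\langle v_i,x\rangle)|^2|\langle v_i,u\rangle|^2\le q^2\|u\|^2,
\]
using Lemma~\ref{lem:varphi}(iii). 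Hence $\|DF(x)\|\le q$, and $F$ is a $q$-contraction. Smoothness $F\in C^p$ is inherited from Lemma~\ref{lem:varphi}(i).

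\textbf{Lipschitz bound on $D^pF$.} With $\Delta_i$ defined as before, now for $1\le i\le m$, I will bound
\[
\|(D^pF(x)-D^pF(y))[u_1,\ldots,u_p]\|^2=\sum_{i=1}^m|\Delta_i|^2\prod_{\ell=1}^p|\langle v_i,u_\ell\rangle|^2\le L^2\|x-y\|^2\sum_{i=1}^m|\langle v_i,u_1\rangle|^2\le L^2\|x-y\|^2.
\]
This uses Lemma~\ref{lem:varphi}(iv), together with $|\langle v_i,u_\ell\rangle|\le1$ for $\ell\ge2$ and Bessel's inequality.

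\textbf{Fixed point.} As in Lemma~\ref{lem:chain-smooth}, $s_1>\cdots>s_m=R\ge a$, so Lemma~\ref{lem:varphi}(v) gives $\varphi(s_i)=qs_i-b=s_{i+1}$ for $i<m$, and also $\varphi(s_m)=qR-b$. The $v_{i+1}$-components of $F(x_\star)$ are therefore $s_{i+1}$. The $v_1$-component is
\[
c-\varphi(s_m)=s_1+qR-b-(qR-b)=s_1,
\]
so $F(x_\star)=x_\star$. Uniqueness follows from the contraction property.

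I do not expect a genuine obstacle. The only point needing care is confirming that the wrap-around term does not collide with another output direction. It does not: $v_1$ receives no other contribution, because the sum only targets $v_2,\ldots,v_m$. This is exactly what keeps the operator norm bound at $q$ rather than something larger.
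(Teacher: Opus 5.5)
Your proposal is correct and follows the paper's proof essentially line by line. The paper uses the same derivative formula, with the wrap-around term feeding $v_1$, and the same orthonormality computation, with the sum over $i$ extended to $m$, to get $\|DF(x)\|\le q$ and $\mathrm{Lip}(D^pF)\le L$; it then checks $c-\varphi(s_m)=s_1$ to show $F(x_\star)=x_\star$ and gets uniqueness from the contraction property.
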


\begin{proof}
For $k\in\{1,\ldots,p\}$,
\[
\begin{aligned}
    D^kF(x)[u_1,\ldots,u_k]
    &= -\varphi^{(k)}(\langle v_m,x\rangle)
      \prod_{\ell=1}^k\langle v_m,u_\ell\rangle\,v_1  \\
    &\quad+\sum_{i=1}^{m-1}
    \varphi^{(k)}(\langle v_i,x\rangle)\prod_{\ell=1}^k\langle v_i,u_\ell\rangle\,v_{i+1}.
\end{aligned}
\]
In particular,
\[
    DF(x)u
    =
    -\varphi'(\langle v_m,x\rangle)\langle v_m,u\rangle v_1
    +
    \sum_{i=1}^{m-1}
    \varphi'(\langle v_i,x\rangle)\langle v_i,u\rangle v_{i+1}.
\]
Since $v_1,\ldots,v_m$ are orthonormal and $|\varphi'|\le q$,
\[
\begin{aligned}
    \|DF(x)u\|^2
    &=
    |\varphi'(\langle v_m,x\rangle)|^2|\langle v_m,u\rangle|^2 + \sum_{i=1}^{m-1}
    |\varphi'(\langle v_i,x\rangle)|^2|\langle v_i,u\rangle|^2 \\
    &\le q^2\sum_{i=1}^{m}|\langle v_i,u\rangle|^2
    \le q^2\|u\|^2.
\end{aligned}
\]
Thus $\|DF(x)\|\le q$ for every $x$, so $F$ is a $q$-contraction. It remains to check the $p$-th derivative Lipschitz bound. For unit vectors
$u_1,\ldots,u_p$, write
\[
    \Delta_i:=\varphi^{(p)}(\langle v_i,x\rangle)-
    \varphi^{(p)}(\langle v_i,y\rangle)\qquad(1\le i\le m)
\]
for some $x,y\in \mathbb{R}^n$.
Then
\[
    |\Delta_i| \le L|\langle v_i,x-y\rangle|.
\]
Therefore
\[
\begin{aligned}
    &\|(D^pF(x)-D^pF(y))[u_1,\ldots,u_p]\|^2 \\
    &\quad = |\Delta_m|^2\prod_{\ell=1}^p|\langle v_m,u_\ell\rangle|^2+\sum_{i=1}^{m-1}|\Delta_i|^2\prod_{\ell=1}^p|\langle v_i,u_\ell\rangle|^2  \\
    &\quad \le L^2  \sum_{i=1}^{m} |\langle v_i,x-y\rangle|^2 \prod_{\ell=1}^p|\langle v_i,u_\ell\rangle|^2  \\
    &\quad \le  L^2\|x-y\|^2  \sum_{i=1}^{m}|\langle v_i,u_1\rangle|^2 \le L^2\|x-y\|^2.
\end{aligned}
\]
Taking the supremum over unit $u_1,\ldots,u_p$ gives
\[
    \mathrm{Lip}(D^pF)\le L.
\]
Finally, since $s_m=R\ge a$ and
\[
    s_i=\frac{s_{i+1}+b}{q}>s_{i+1} \qquad(1\le i\le m-1),
\]
we have $s_1>\cdots>s_m=R\ge a$.  Hence
\[
    \varphi(s_i)=qs_i-b=s_{i+1} \qquad(1\le i\le m-1),
\]
and
\[
    \varphi(s_m)=qR-b.
\]
By the definition of $c$,
\[
    c-\varphi(s_m)  = s_1+qR-b-(qR-b) =  s_1.
\]
Therefore
\[
\begin{aligned}
    F(x_\star)
    &=
    \bigl(c-\varphi(s_m)\bigr)v_1
    +
    \sum_{i=1}^{m-1}\varphi(s_i)v_{i+1} \\
    &=
    s_1v_1+\sum_{i=1}^{m-1}s_{i+1}v_{i+1}
    =
    x_\star.
\end{aligned}
\]
The fixed point is unique because $F$ is a strict contraction.
\end{proof}

\paragraph{Choice of constants.}
 For the rest of this section, fix $p,N\in\mathbb N$, $q\in(0,1)$, $L>0$, and $\varepsilon\in(0,1)$. We now choose $R\ge a$ in Lemma~\ref{lem:cyclic-chain-smooth}, and then  establish several auxiliary lemmas.
 Define
\[
    S_1:=\sum_{j=0}^{N-1}q^j,
    \qquad
    S_2:=\sum_{j=0}^{N}q^{2j}.
\]
Choose $R\ge a$ sufficiently large so that
\begin{equation}\label{eq:Rchoice1}
     R(1-q)>a+b
\end{equation}
and, with
\[
    \theta:=\frac{b}{R(1-q)},
    \qquad
    \delta:=\frac{b\sqrt{S_2}}{R(1+q^{N+1})},
\]
we have
\begin{equation}\label{eq:Rchoice2}
    \theta+\delta\le\varepsilon.
\end{equation}
Let $s_{N+1}=R$ and
\[
    s_i:=\frac{s_{i+1}+b}{q}\qquad(1\le i\le N),
\]
so that
\[
    s_1=q^{-N}(R+bS_1), \qquad c:=s_1+qR-b.
\]
Define
\[
   F(x):=c v_1-\varphi(\langle v_{N+1},x\rangle)v_1+\sum_{i=1}^{N}\varphi(\langle v_i,x\rangle)v_{i+1}.
\]
By Lemma~\ref{lem:cyclic-chain-smooth}, this map belongs to $\mathcal C_{p,q,L}$ and has fixed point $x_\star=\sum_{i=1}^{N+1} s_i v_i$. We first estimate an upper bound on the distance to the origin from the fixed point $x_\star$. 

\begin{lemma}\label{lem:residual-xstar-bound}
With the above definitions,
\[
    \|x_\star\|\le(1+\theta)q^{-N}R\sqrt{S_2},
\]
\end{lemma}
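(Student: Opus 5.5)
Since $v_1,\ldots,v_{N+1}$ are orthonormal, we have $\|x_\star\|^2=\sum_{i=1}^{N+1}s_i^2$. The plan is to compare each $s_i$ with the geometric comparison sequence $g_i:=Rq^{-(N+1-i)}$, exactly as in the proof of Theorem~\ref{thm:error}, now with $m=N+1$.

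First, iterate the backward recursion $s_i=(s_{i+1}+b)/q$ starting from $s_{N+1}=R$. This gives
\[
s_i=Rq^{-(N+1-i)}+b\sum_{r=1}^{N+1-i}q^{-r}.
\]
The geometric sum is bounded by
\[
\sum_{r=1}^{N+1-i}q^{-r}=q^{-(N+1-i)}\bigl(1+q+\cdots+q^{N-i}\bigr)\le \frac{q^{-(N+1-i)}}{1-q}.
\]
Hence $0<s_i\le (1+\theta)g_i$ for every $1\le i\le N+1$, where $\theta=b/(R(1-q))$.

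Second, sum the squares and reindex with $j=N+1-i\in\{0,\ldots,N\}$:
\[
\|x_\star\|^2\le(1+\theta)^2R^2\sum_{j=0}^{N}q^{-2j}=(1+\theta)^2R^2q^{-2N}\sum_{j=0}^{N}q^{2(N-j)}=(1+\theta)^2q^{-2N}R^2S_2 .
\]
Taking square roots gives the claimed bound $\|x_\star\|\le(1+\theta)q^{-N}R\sqrt{S_2}$.

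There is no real obstacle here. The only points needing care are the positivity of the $s_i$, which is needed to square the inequality $s_i\le(1+\theta)g_i$ and follows from $R\ge a>0$ and $b>0$, and the reindexing that turns $\sum_j q^{-2j}$ into $q^{-2N}S_2$.
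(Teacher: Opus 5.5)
Your proposal is correct and follows the paper's proof essentially verbatim: unroll the recursion to compare $s_i$ with $g_i=Rq^{-(N+1-i)}$, bound the geometric sum by $q^{-(N+1-i)}/(1-q)$ to obtain $s_i\le(1+\theta)g_i$, and then sum squares using orthonormality and $\sum_{i=1}^{N+1}g_i^2=q^{-2N}R^2S_2$. You also state explicitly that the $s_i$ are positive, which is needed to square the bound; the paper leaves this implicit by noting $s_i\ge g_i$.
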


\begin{proof}
Define
\[
    g_i:=Rq^{-(N+1-i)}
    \qquad(1\le i\le N+1).
\]
The recursion gives
\[
    s_i = g_i+b\sum_{r=1}^{N+1-i}q^{-r}.
\]
Thus $s_i\ge g_i$. Moreover,
\[
    \sum_{r=1}^{N+1-i}q^{-r}\le \frac{q^{-(N+1-i)}}{1-q},
\]
so
\[
    s_i \le g_i\left(1+\frac{b}{R(1-q)}\right)
    = (1+\theta)g_i.
\]
Consequently,
\[
    \|x_\star\|
    \le
    (1+\theta)
    \left(\sum_{i=1}^{N+1}g_i^2\right)^{1/2}
    =
    (1+\theta)q^{-N}R\sqrt{S_2}.
\]
\end{proof}

We next analyze an affine proxy for the chain map. On the positive tail, $\varphi$ agrees with the affine map $s\mapsto qs-b$. The following lemma shows that the missing final coordinate forces residual at least $\frac{1+q^{N+1}}{\sqrt{S_2}}R$ on the
hidden hyperplane.

\begin{lemma}\label{lem:affine-lowerbound}
Let
\[
    H:=\{x\in\mathbb R^n:\langle v_{N+1},x\rangle=0\}
\]
and define
\[
    F_{\rm aff}(x):= cv_1 -\bigl(q\langle v_{N+1},x\rangle-b\bigr)v_1+\sum_{i=1}^{N}\bigl(q\langle v_i,x\rangle-b\bigr)v_{i+1}.
\]
Then
\[
    \min_{x\in H}\|x-F_{\rm aff}(x)\| =\frac{1+q^{N+1}}{\sqrt{S_2}}R.
\]
\end{lemma}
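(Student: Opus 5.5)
The plan is to reduce the minimization to a finite-dimensional linear least-squares problem in the chain coordinates and to solve it by projecting onto the orthogonal complement of a single vector. Write $x\in H$ as $x=\sum_{i=1}^{N}y_iv_i+w$ with $w\perp v_1,\ldots,v_{N+1}$; the constraint $x\in H$ means $y_{N+1}=0$. Since $F_{\rm aff}(x)\in\mathrm{span}\{v_1,\ldots,v_{N+1}\}$, the residual splits orthogonally:
\[
\|x-F_{\rm aff}(x)\|^2=\|w\|^2+\bigl(y_1-c-b\bigr)^2+\sum_{i=1}^{N-1}\bigl(y_{i+1}-qy_i+b\bigr)^2+\bigl(b-qy_N\bigr)^2 .
\]
Hence the minimum is attained with $w=0$, and we only need to minimize over $y=(y_1,\ldots,y_N)\in\mathbb R^N$.

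Next I will recenter at the fixed point by writing $y_i=s_i+z_i$, where the $s_i$ are the chain scalars with $s_{N+1}=R$. Using $s_{i+1}=qs_i-b$ and $c+b=s_1+qR$, the residual components become
\[
z_1-qR,\qquad z_{i+1}-qz_i\ \ (1\le i\le N-1),\qquad -qz_N-R .
\]
So the residual equals $Mz-d$, where $M\colon\mathbb R^N\to\mathbb R^{N+1}$ is $z\mapsto(z_1,\,z_2-qz_1,\,\ldots,\,z_N-qz_{N-1},\,-qz_N)$ and $d=(qR,0,\ldots,0,R)$. The quantity to compute is therefore $\mathrm{dist}(d,\operatorname{range}M)$.

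The key step is to identify $\operatorname{range}M$. It is injective (bidiagonal with unit leading entries), so its range is a hyperplane in $\mathbb R^{N+1}$. I will find its normal $u$ by requiring $\langle u,Mz\rangle=0$ for all $z$. The coefficient of $z_i$ in that expression is $u_i-qu_{i+1}$, which forces $u_i=q^{N+1-i}$, i.e.\ $u=(q^N,q^{N-1},\ldots,q,1)$. Then
\[
\mathrm{dist}(d,\operatorname{range}M)=\frac{|\langle u,d\rangle|}{\|u\|}=\frac{q^{N+1}R+R}{\sqrt{\sum_{j=0}^{N}q^{2j}}}=\frac{1+q^{N+1}}{\sqrt{S_2}}R .
\]
The minimum is attained, since the least-squares problem has a solution.

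The only delicate points are sign and bookkeeping. First, the $v_1$-component of $F_{\rm aff}$ on $H$ is $c+b$, because $-(q\cdot 0-b)=+b$. Second, the last component $b-qy_N$ must correctly produce $-R$ via $qs_N-b=s_{N+1}=R$. I expect this careful verification of the recentered residual, rather than the projection itself, to be the main place where an error could creep in.
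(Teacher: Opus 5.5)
Your proof is correct and takes essentially the same approach as the paper. The paper also recenters at $x_\star$ and writes the projected residual as $-qRv_1-Rv_{N+1}$ plus an element of $\mathrm{span}\{v_i-qv_{i+1}\}$, which is your $\operatorname{range}M$, then divides by the normal vector $(q^N,\ldots,q,1)$ of norm $\sqrt{S_2}$; your explicit choice $w=0$ plays the role of the paper's bound $\|r\|\ge\|Pr\|$.
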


\begin{proof}
Write $z=x-x_\star$ and $r=x-F_{\rm aff}(x)$.  In the coordinates
$v_1,\ldots,v_{N+1}$, we have
\begin{align*}
r_1&=x_1-(c-qx_{N+1}+b) \\
&=x_1-\bigl(s_1+qs_{N+1}-b\bigr)+qx_{N+1}-b \\
&=(x_1-s_1)+q(x_{N+1}-s_{N+1}) \\
&=z_1+qz_{N+1}.
\end{align*}
and for $1\le i \le N$,
\begin{align*}
r_{i+1}&=x_{i+1}-(qx_i-b)\\
&=(x_{i+1}-s_{i+1})-q(x_i-s_i) \\
&=z_{i+1}-qz_i.
\end{align*}
Since $x\in H$, we have $x_{N+1}=0$, and hence
\[
z_{N+1}=x_{N+1}-s_{N+1}=-R.
\]
Using the coordinate identities above, for the projection $P\colon r\rightarrow \mathrm{span}\{v_1,\ldots,v_{N+1}\}$, we get:
\begin{align*}
Pr&=(z_1-qR)v_1+\sum_{i=1}^{N-1}(z_{i+1}-qz_i)v_{i+1}
+(-R-qz_N)v_{N+1} \\
&=-qR v_1-Rv_{N+1}+\sum_{i=1}^{N}z_i(v_i-qv_{i+1})\\
&\in -qR v_1-Rv_{N+1}+\mathrm{span}\{ v_1-qv_2,\, v_2-qv_3,\,\ldots,\, v_N-qv_{N+1}\}
\end{align*}
The orthogonal complement of $\mathrm{span}\{ v_1-qv_2,\, v_2-qv_3,\,\ldots,\, v_N-qv_{N+1}\}$ inside $\mathrm{span}\{v_1,\ldots,v_{N+1}\}$
is generated by
\[
    w:=q^Nv_1+q^{N-1}v_2+\cdots+qv_N+v_{N+1}.
\]
with $\|w\|=\sqrt{S_2}$. Therefore, 
\[
   \|r\|\ge  \|Pr\|\ge\frac{|\langle w,Pr\rangle|}{\|w\|}=\frac{R(1+q^{N+1})}{\sqrt{S_2}}.
\]
Equality is attained by choosing the free variables $z_1,\ldots,z_N$ so that the projected residual $Pr$ is parallel to $w$.
 Hence
\[
    \min_{x\in H}\|x-F_{\rm aff}(x)\| = \frac{1+q^{N+1}}{\sqrt{S_2}}R.
\]
\end{proof}

The actual map $F$ is not globally affine, because $\varphi$ is flattened near the origin to hide finite-order derivative information. Nevertheless, a point
on the hidden hyperplane with residual smaller than $\frac{1+q^{N+1}}{\sqrt{S_2}}R$ would be forced into the affine region of every revealed coordinate. The following lemma makes this argument rigorous.

\begin{lemma}\label{lem:hyperplane}
For every $x\in H$,
\[
    \|x-F(x)\|\ge\frac{1+q^{N+1}}{\sqrt{S_2}}R-b>0.
\]
\end{lemma}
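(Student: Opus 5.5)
The plan is to compare the residual of $F$ directly with the affine residual analysed in Lemma~\ref{lem:affine-lowerbound}. I will not try to control the pointwise difference $F-F_{\rm aff}$, which can be as large as $2b$ in coordinates where $\langle v_i,x\rangle\le -a$. Instead I will test the residual against the single vector
\[
w=q^Nv_1+q^{N-1}v_2+\cdots+qv_N+v_{N+1},\qquad \|w\|=\sqrt{S_2},
\]
and use the one-sided inequality (vi) of Lemma~\ref{lem:varphi}. This should avoid any case split on whether the coordinates lie in the affine region.

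\textbf{Step 1: coordinates of the residual.} Fix $x\in H$ and write $x_i=\langle v_i,x\rangle$ and $r=x-F(x)$. Since $x_{N+1}=0$ and $\varphi(0)=0$, the definition of $F$ gives
\[
r_1=x_1-c,\qquad r_{i+1}=x_{i+1}-\varphi(x_i)\quad(1\le i\le N),
\]
and in particular $r_{N+1}=-\varphi(x_N)$.

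\textbf{Step 2: telescope against $w$.} Pairing with $w$, every $x_{i+1}$ with $1\le i\le N-1$ appears once with coefficient $q^{N-i}$ from $r_{i+1}$. It appears once more through $q\cdot q^{N-i-1}x_{i+1}$ when that term is regrouped with $-\varphi(x_{i+1})$. Regrouping this way gives
\[
\langle w,r\rangle=-q^Nc+\sum_{i=1}^{N}q^{N-i}\bigl(qx_i-\varphi(x_i)\bigr).
\]
By (vi) we have $qx_i-\varphi(x_i)\le b$ for every $i$. Hence
\[
\langle w,r\rangle\le -q^Nc+bS_1 .
\]

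\textbf{Step 3: insert the constants.} Using $s_1=q^{-N}(R+bS_1)$ and $c=s_1+qR-b$, we get
\[
q^Nc=R+bS_1+q^{N+1}R-q^Nb .
\]
Substituting into Step 2,
\[
\langle w,r\rangle\le-(1+q^{N+1})R+q^Nb .
\]
By Cauchy--Schwarz, $\|r\|\ge|\langle w,r\rangle|/\sqrt{S_2}$. Since $S_2\ge1$ and $q^N\le1$, this yields
\[
\|r\|\ge\frac{(1+q^{N+1})R}{\sqrt{S_2}}-b .
\]
For this step I still need the right-hand side of the previous display to be negative, so that its absolute value is at least $(1+q^{N+1})R-q^Nb$. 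That sign condition follows from the positivity argument in Step 4.

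\textbf{Step 4: strict positivity.} Using $S_2\le (1-q^2)^{-1}$, I will bound
\[
\frac{(1+q^{N+1})R}{\sqrt{S_2}}\ge R\sqrt{1-q^2}\ge R(1-q)>a+b>b ,
\]
where the strict inequality $R(1-q)>a+b$ is the choice \eqref{eq:Rchoice1}.

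\textbf{Main obstacle.} The delicate point is finding the right linear functional, namely recognising that the same $w$ that realises the minimum in Lemma~\ref{lem:affine-lowerbound} telescopes the nonlinear chain into terms of the form $qx_i-\varphi(x_i)$. Once that is seen, (vi) bounds every such term in the correct direction simultaneously, and the rest is bookkeeping of constants.
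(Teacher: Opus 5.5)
Your proof is correct, and it takes a genuinely different route from the paper's. The paper argues by contradiction. Assuming $\|r\|<\frac{1+q^{N+1}}{\sqrt{S_2}}R-b$, it propagates (vi) inductively to get $x_i\ge s_i+q^iR-q^{i-1}b+\sum_{k=1}^{i}q^{i-k}r_k$. From this it deduces $x_i>a$ for every revealed coordinate, so that $F(x)=F_{\rm aff}(x)-bv_1$ by the affine-tail property (v). It then invokes Lemma~\ref{lem:affine-lowerbound} to get $\|r\|+b\ge\frac{1+q^{N+1}}{\sqrt{S_2}}R$.

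You instead use $w$ directly as a dual certificate. In effect, since $w$ has nonnegative coefficients, (vi) bounds $\langle w,F_{\rm aff}(x)-F(x)\rangle\le q^Nb$ term by term, even though $F_{\rm aff}-F$ itself need not be small. Your identity $\langle w,r\rangle=-q^Nc+\sum_{i=1}^{N}q^{N-i}\bigl(qx_i-\varphi(x_i)\bigr)$ checks out. Your wording about $x_{i+1}$ ``appearing once more'' really describes a regrouping of a single term, but the formula is right.

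What your route buys:
\begin{itemize}
\item It avoids the contradiction and the induction.
\item It uses neither (v) nor the minimization in Lemma~\ref{lem:affine-lowerbound}.
\item It proves the inequality for every $R$; positivity then needs only $(1+q^{N+1})R>b\sqrt{S_2}$, rather than the $a$-dependent condition \eqref{eq:Rchoice1}.
\item It gives the slightly sharper bound $\bigl((1+q^{N+1})R-q^Nb\bigr)/\sqrt{S_2}$.
\end{itemize}
The paper's route buys structural insight instead: any point of $H$ with near-minimal residual lies in the affine region of all revealed coordinates, which explains why the affine proxy is the right model. That insight is not needed for the inequality itself.

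Finally, the sign condition you flag in Step~3 follows at once from Step~4, since $(1+q^{N+1})R>b\sqrt{S_2}\ge b\ge q^Nb$. Even if it failed, the inequality would hold trivially, because its right-hand side would then be nonpositive.
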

\begin{proof}
First, note that 
\[
    S_2=\sum_{j=0}^{N}q^{2j}\le \frac{1}{1-q^2},
\]
and hence
\[
    \frac{1+q^{N+1}}{\sqrt{S_2}}
    \ge \sqrt{1-q^2}
    \ge 1-q.
\]
Therefore, by the choice of $R$ in \eqref{eq:Rchoice1},
\[
    \frac{1+q^{N+1}}{\sqrt{S_2}}R-b
    \ge R(1-q)-b>a>0.
\]
Now, fix $x\in H$ and write
\[
    x=\sum_{i=1}^{N+1}x_i v_i+x_\perp,
    \qquad x_\perp\perp\mathrm{span}\{v_1,\ldots,v_{N+1}\}.
\]
Since $x\in H$, we have $x_{N+1}=0$.  Let
\[
    r:=x-F(x)=\sum_{i=1}^{N+1}r_i v_i+x_\perp.
\]
Then
\[
    r_1=x_1-c,
    \qquad
    r_{i+1}=x_{i+1}-\varphi(x_i)\qquad(1\le i\le N).
\]
Using $c=s_1+qR-b$, we get
\[
    x_1=s_1+r_1+qR-b.
\]
Moreover, since $\varphi(s)\ge qs-b$ for all $s$,
\[
    r_{i+1}=x_{i+1}-\varphi(x_i)\le x_{i+1}-qx_i+b,
\]
and hence
\[
x_{i+1}\ge qx_i-b+r_{i+1} \qquad(1\le i\le N).
\]
Using $s_{i+1}=qs_i-b$, induction gives
\[
x_i\ge s_i+q^iR-q^{i-1}b+\sum_{k=1}^{i}q^{i-k}r_k
\qquad(1\le i\le N).
\]
Suppose, for the sake of contradiction, that
\[
\|r\|<\frac{1+q^{N+1}}{\sqrt{S_2}}R-b,
\]
Then
\[
\left|\sum_{k=1}^{i}q^{i-k}r_k\right| \le \sqrt{S_2}\|r\|
< R(1+q^{N+1}) \qquad(1\le i\le N).
\]
Also we showed $s_i\ge Rq^{-(N+1-i)}$ in the proof of Lemma~\ref{lem:residual-xstar-bound}. Therefore
\begin{align*}
x_i &> Rq^{-(N+1-i)}+q^iR-q^{i-1}b-R(1+q^{N+1}) \\
&= R\bigl(q^{-(N+1-i)}-1+q^i-q^{N+1}\bigr)-q^{i-1}b \\
&\ge R\bigl(q^{-1}-1\bigr)-b \\
&= R\frac{1-q}{q}-b \\
&\ge R(1-q)-b\\
&>a.
\end{align*}
Thus
\[
\varphi(x_i)=qx_i-b \qquad(1\le i\le N).
\]
Since $x\in H$, we also have $x_{N+1}=0$, so $\varphi(x_{N+1})=0$.
Hence, 
\[
F(x)=F_{\rm aff}(x)-b v_1.
\]
Therefore
\[
x-F_{\rm aff}(x)=r-bv_1.
\]
By Lemma~\ref{lem:affine-lowerbound},
\[
\frac{1+q^{N+1}}{\sqrt{S_2}}R \le \|x-F_{\rm aff}(x)\| =\|r-bv_1\| \le \|r\|+b.
\]
Thus
\[
\|r\|\ge \frac{1+q^{N+1}}{\sqrt{S_2}}R-b,
\]
contradicting our assumption. Therefore, 
\[
\|x-F(x)\| \ge \frac{1+q^{N+1}}{\sqrt{S_2}}R-b.
\]
\end{proof}

\subsection{Proof of the complexity lower bound}
We now transfer the lower bound on the hyperplane into a complexity lower bound. Similar to the proof of Theorem~\ref{thm:error}, the directions are chosen adaptively, after seeing the method's previous queries, so that each new direction is orthogonal to all information generated so far.

\begin{proof}[Proof of Theorem~\ref{thm:residual}]
Fix the deterministic method on $\mathbb R^n$, where $n\ge 2N+1$, and
without loss of generality, set $x_0:=0$. We construct $v_1,\ldots,v_{N+1}$ inductively.  Suppose that at iteration
$t\in\{0,\ldots,N\}$, the points $x_0,\ldots,x_t$ are known and
$v_1,\ldots,v_t$ have been chosen orthonormally.  Choose $v_{t+1}$ to be any
unit vector orthogonal to
\[
   \mathrm{span}\{v_1,\ldots,v_t,x_0,\ldots,x_t\}.
\]
This is possible because $x_0=0$, so the dimension of the span is at most
$2t$, while $n\ge 2N+1>2t$.

After choosing $v_{t+1}$, define the truncated map
\[
    F_t(x):=
    c v_1+
    \sum_{i=1}^{t}\varphi(\langle v_i,x\rangle)v_{i+1},
\]
where the sum is empty when $t=0$.  For $t<N$, answer the method's query at
$x_t$ with the derivatives up to order $p$ of $F_t$ at $x_t$.  Since the method
is deterministic, this fixes $x_{t+1}$.  The choice at $t=N$ is used only to
define the final hidden direction $v_{N+1}$.
Now define the final chain map
\[
    F(x):= c v_1 -\varphi(\langle v_{N+1},x\rangle)v_1  +\sum_{i=1}^{N}\varphi(\langle v_i,x\rangle)v_{i+1}.
\]
By Lemma~\ref{lem:cyclic-chain-smooth}, $F\in\mathcal C_{p,q,L}(\mathbb R^n)$
and has the unique fixed point
\[
    x_\star:=\sum_{i=1}^{N+1}s_i v_i.
\]
Moreover, by Lemma~\ref{lem:residual-xstar-bound},
\[
    \|x_\star\|
    \le
    (1+\theta)q^{-N}R\sqrt{S_2}.
\]

We claim that the simulated transcript is exactly the transcript of the final
map $F$ through the queries that determine $x_N$.  Fix
$t\in\{0,\ldots,N-1\}$.  If $i>t$, then $v_i$ is orthogonal to $x_t$.  Hence
\[
    \langle v_i,x_t\rangle=0 \qquad(i>t).
\]
Since
\[
    \varphi^{(k)}(0)=0  \qquad(0\le k\le p),
\]
every term with $i>t$ contributes zero to $D^kF(x_t)$ for $0\le k\le p$.
In particular, the term
\[
    -\varphi(\langle v_{N+1},x\rangle)v_1
\]
also contributes zero at every query $x_t$ with $t<N$.  Therefore
\[
    D^kF(x_t)=D^kF_t(x_t)  \qquad  (0\le k\le p,\ 0\le t<N).
\]
Thus $x_1,\ldots,x_N$ are exactly the iterates produced by the method when run on the final map $F$.

The final vector $v_{N+1}$ was chosen orthogonal to $x_N$, so
\[
    x_N\in H:=\{x\in\mathbb R^n:\langle v_{N+1},x\rangle=0\}.
\]
By Lemma~\ref{lem:hyperplane},
\[
    \|x_N-F(x_N)\|\ge \frac{1+q^{N+1}}{\sqrt{S_2}}R-b.
\]
Since $x_0=0$, we also have
\[
    \|x_0-x_\star\| = \|x_\star\| \le (1+\theta)q^{-N}R\sqrt{S_2}.
\]
Combining the last two inequalities gives
\begin{align*}
    \frac{\|x_N-F(x_N)\|}{\|x_0-x_\star\|}
    &\ge  \frac{\frac{1+q^{N+1}}{\sqrt{S_2}}R-b}{(1+\theta)q^{-N}R\sqrt{S_2}} \\
    &=\frac{1}{1+\theta}
    \left(1-\frac{b\sqrt{S_2}}{R(1+q^{N+1})}\right)
    \frac{q^N(1+q^{N+1})}{S_2}.
\end{align*}
By the definitions of $\theta,\delta$ and the choice of $R$,
\[
    \frac{1}{1+\theta}\left(1-\frac{b\sqrt{S_2}}{R(1+q^{N+1})} \right)=\frac{1}{1+\theta}(1-\delta)\ge 1-\theta-\delta\ge 1-\varepsilon.
\]
Since
\[
    S_2=\sum_{j=0}^{N}q^{2j} = \left(\sum_{j=0}^{N}q^j\right)\frac{1+q^{N+1}}{1+q},
\]
we have
\[
    \frac{1+q^{N+1}}{S_2} =  \frac{1+q}{\sum_{j=0}^{N}q^j}.
\]
Therefore
\begin{align*}
    \frac{\|x_N-F(x_N)\|}{\|x_0-x_\star\|}
    &\ge(1-\varepsilon)(1+q)\frac{q^N}{\sum_{j=0}^{N}q^j}  \\
    &=(1-\varepsilon)(1+q)  \left(\sum_{j=0}^{N}q^{-j}\right)^{-1}.
\end{align*}
Hence
\[
    \|x_N-F(x_N)\| \ge  (1-\varepsilon)(1+q) \left(\sum_{j=0}^{N}q^{-j}\right)^{-1} \|x_0-x_\star\|.
\]
For an arbitrary starting point $x_0\in\mathbb R^n$, the result follows by applying the above construction in the translated coordinates $z:=x-x_0$, exactly as in the proof of Theorem~\ref{thm:error}. We omit the details. 
\end{proof}

\section{Conclusion}\label{sec:conclusion}
In this work, we showed that finite-order derivative oracles do not accelerate
the computation of fixed points: for smooth $q$-contractions, no deterministic
$p$-th order method can improve upon the exact worst-case rates of Picard
iteration for the fixed-point error and of OC-Halpern for the fixed-point
residual.

As an avenue for future work, we note that our constructions require the
ambient dimension $n$ to be large relative to the query budget $N$ (e.g.,
$n\ge 2N+1$), as is standard for dimension-free lower bounds. In convex
optimization, dimension-dependent methods can be substantially faster than
dimension-free methods when the dimension is small: classical localization
schemes such as the center-of-gravity method achieve an
$O(n\log(1/\varepsilon))$ oracle complexity~\cite{ayu1965algorithm,newman1965location},
with a matching $\Omega(n\log(1/\varepsilon))$ lower
bound~\cite{nemirovsky1983problem}. Developing an analogous
dimension-dependent oracle-complexity theory for fixed-point problems with
higher-order oracles would be an interesting direction.

\section*{Acknowledgments}
UJ and EKR were supported by the Air Force Office of Scientific Research under award number FA95502510183.

\bibliographystyle{plain}
\bibliography{ref}

\begin{thebibliography}{10}

\bibitem{ayu1965algorithm}
Levin A~Yu.
\newblock On an algorithm for the minimization of convex functions.
\newblock {\em Soviet Mathematics Doklady}, 6:286--290, 1965.

\bibitem{arjevani2017oracle}
Yossi Arjevani and Ohad Shamir.
\newblock Oracle complexity of second-order methods for finite-sum problems.
\newblock {\em International Conference on Machine Learning}, 2017.

\bibitem{arjevani2019oracle}
Yossi Arjevani, Ohad Shamir, and Ron Shiff.
\newblock Oracle complexity of second-order methods for smooth convex optimization.
\newblock {\em Mathematical Programming}, 178(1):327--360, 2019.

\bibitem{banach1922operations}
Stefan Banach.
\newblock Sur les op{\'e}rations dans les ensembles abstraits et leur application aux {\'e}quations int{\'e}grales.
\newblock {\em Fundamenta mathematicae}, 3(1):133--181, 1922.

\bibitem{bauschke2017convex}
Heinz~H Bauschke and Patrick~L Combettes.
\newblock {\em Convex Analysis and Monotone Operator Theory in Hilbert Spaces}.
\newblock Springer, 2017.

\bibitem{carmon2017convex}
Yair Carmon, John~C Duchi, Oliver Hinder, and Aaron Sidford.
\newblock “{C}onvex until proven guilty”: Dimension-free acceleration of gradient descent on non-convex functions.
\newblock {\em International Conference on Machine Learning}, 2017.

\bibitem{carmon2018accelerated}
Yair Carmon, John~C Duchi, Oliver Hinder, and Aaron Sidford.
\newblock Accelerated methods for nonconvex optimization.
\newblock {\em SIAM Journal on Optimization}, 28(2):1751--1772, 2018.

\bibitem{carmon2020lower}
Yair Carmon, John~C Duchi, Oliver Hinder, and Aaron Sidford.
\newblock Lower bounds for finding stationary points {I}.
\newblock {\em Mathematical Programming}, 184(1):71--120, 2020.

\bibitem{carmon2021lower}
Yair Carmon, John~C Duchi, Oliver Hinder, and Aaron Sidford.
\newblock Lower bounds for finding stationary points {II}: First-order methods.
\newblock {\em Mathematical Programming}, 185(1):315--355, 2021.

\bibitem{cartis2011adaptive}
Coralia Cartis, Nicholas~IM Gould, and Philippe~L Toint.
\newblock Adaptive cubic regularisation methods for unconstrained optimization. part {I}: motivation, convergence and numerical results.
\newblock {\em Mathematical Programming}, 127(2):245--295, 2011.

\bibitem{chari2026optimal}
Govind~M Chari, Uijeong Jang, Ernest~K Ryu, and Beh{\c{c}}et A{\c{c}}{\i}kme{\c{s}}e.
\newblock Optimal acceleration for proximal minimization of the sum of convex and strongly convex functions.
\newblock {\em arXiv preprint arXiv:2605.08593}, 2026.

\bibitem{contreras2023optimal}
Juan~Pablo Contreras and Roberto Cominetti.
\newblock Optimal error bounds for non-expansive fixed-point iterations in normed spaces.
\newblock {\em Mathematical Programming}, 199(1):343--374, 2023.

\bibitem{diakonikolas2020halpern}
Jelena Diakonikolas.
\newblock Halpern iteration for near-optimal and parameter-free monotone inclusion and strong solutions to variational inequalities.
\newblock {\em Conference on Learning Theory}, 2020.

\bibitem{drori2017exact}
Yoel Drori.
\newblock The exact information-based complexity of smooth convex minimization.
\newblock {\em Journal of Complexity}, 39:1--16, 2017.

\bibitem{drori2022oracle}
Yoel Drori and Adrien Taylor.
\newblock On the oracle complexity of smooth strongly convex minimization.
\newblock {\em Journal of Complexity}, 68:101590, 2022.

\bibitem{jang2025computer}
Uijeong Jang, Shuvomoy~Das Gupta, and Ernest~K Ryu.
\newblock Computer-assisted design of accelerated composite optimization methods: {OptISTA}.
\newblock {\em Mathematical Programming}, pages 1--109, 2025.

\bibitem{jin2018accelerated}
Chi Jin, Praneeth Netrapalli, and Michael~I Jordan.
\newblock Accelerated gradient descent escapes saddle points faster than gradient descent.
\newblock {\em Conference on Learning Theory}, 2018.

\bibitem{kelley1995iterative}
Carl~T Kelley.
\newblock {\em Iterative Methods for Linear and Nonlinear Equations}.
\newblock SIAM, 1995.

\bibitem{kim2021accelerated}
Donghwan Kim.
\newblock Accelerated proximal point method for maximally monotone operators.
\newblock {\em Mathematical Programming}, 190(1):57--87, 2021.

\bibitem{kovalev2022first}
Dmitry Kovalev and Alexander Gasnikov.
\newblock The first optimal acceleration of high-order methods in smooth convex optimization.
\newblock {\em Neural Information Processing Systems}, 2022.

\bibitem{li2023restarted}
Huan Li and Zhouchen Lin.
\newblock Restarted nonconvex accelerated gradient descent: No more polylogarithmic factor in the in the o (epsilon\^{}(-7/4)) complexity.
\newblock {\em Journal of Machine Learning Research}, 24(157):1--37, 2023.

\bibitem{lieder2021convergence}
Felix Lieder.
\newblock On the convergence rate of the halpern-iteration.
\newblock {\em Optimization letters}, 15(2):405--418, 2021.

\bibitem{nemirovsky1983problem}
Arkadi Nemirovsky and David~B Yudin.
\newblock {\em Problem Complexity and Method Efficiency in Optimization}.
\newblock Wiley-Interscience, 1983.

\bibitem{nesterov2018lectures}
Yurii Nesterov.
\newblock {\em Lectures on Convex Optimization}, volume 137.
\newblock Springer, 2018.

\bibitem{nesterov2006cubic}
Yurii Nesterov and Boris~T Polyak.
\newblock Cubic regularization of {N}ewton method and its global performance.
\newblock {\em Mathematical Programming}, 108(1):177--205, 2006.

\bibitem{newman1965location}
Donald~J Newman.
\newblock Location of the maximum on unimodal surfaces.
\newblock {\em Journal of the ACM (JACM)}, 12(3):395--398, 1965.

\bibitem{ortega2000iterative}
James~M Ortega and Werner~C Rheinboldt.
\newblock {\em Iterative solution of nonlinear equations in several variables}.
\newblock SIAM, 2000.

\bibitem{park2022exact}
Jisun Park and Ernest~K Ryu.
\newblock Exact optimal accelerated complexity for fixed-point iterations.
\newblock {\em International Conference on Machine Learning}, 2022.

\bibitem{picard1890memoire}
{\'E}mile Picard.
\newblock Memoire sur la theorie des equations aux derivees partielles et la methode des approximations successives.
\newblock {\em Journal de Math{\'e}matiques pures et appliqu{\'e}es}, 6:145--210, 1890.

\bibitem{sabach2017first}
Shoham Sabach and Shimrit Shtern.
\newblock A first order method for solving convex bilevel optimization problems.
\newblock {\em SIAM Journal on Optimization}, 27(2):640--660, 2017.

\bibitem{zhou2026sharp}
Dongruo Zhou.
\newblock Sharp first-order lower bounds for higher-order smooth nonconvex optimization.
\newblock {\em arXiv preprint arXiv:2606.05438}, 2026.

\end{thebibliography}

\end{document}